\documentclass[11pt]{article}
\usepackage{epsfig,amsmath,amssymb,latexsym,color}
\usepackage{hyperref}
\usepackage{algorithm}
\usepackage{algorithmic}
\usepackage{multirow}
\usepackage{wrapfig,lipsum,booktabs}
\usepackage{xcolor}%
\usepackage{textcomp}%
\usepackage{manyfoot}%
\usepackage{booktabs}%
\usepackage{algorithm}%
\usepackage{algorithmic}
\usepackage{listings}%
\usepackage{wrapfig,lipsum,booktabs}


\setlength{\textheight}{9in}
\setlength{\textwidth}{6.7in}
\setlength{\hoffset}{-.7in}
\setlength{\voffset}{-.8in}

\newcounter{mnote}
  \setcounter{mnote}{0}
  
  \let\oldmarginpar\marginpar
 \renewcommand\marginpar[1]{\-\oldmarginpar[\raggedleft\footnotesize #1]%
    {\raggedright\footnotesize #1}}


\def\bfb{{\bf b}}

\def\vol{{\rm vol}}

\def\bfx{{\bf x}}

\def\det{{\rm det}}

\newtheorem{theorem}{Theorem}
\newtheorem{lemma}{Lemma}

\newtheorem{corollary}{Corollary}
\newtheorem{remark}{Remark}

\newtheorem{definition}{Definition}
\newenvironment{proof}{\begin{trivlist}\item[]{\emph{Proof.}}}
               {\hfill$\Box$\end{trivlist}}

\begin{document}
\title{Maximal Volume Matrix Cross Approximation for Image Compression and Least Squares Solution} 
\author{Kenneth Allen\footnote{kencubed@gmail.com. Department of Mathematics, University of Georgia, Athens, GA 30602}
\and
Ming-Jun Lai~\footnote{mjlai@uga.edu. Department of Mathematics,
University of Georgia, Athens, GA 30602. This author is supported by the Simons Foundation 
Collaboration Grant \#864439.} 
\and 
Zhaiming Shen\footnote{zshen49@gatech.edu. School of Mathematics,
Georgia Institute of Technology, Atlanta, GA 30332.}}
\maketitle
\begin{abstract} 
We study the classic matrix cross approximation based on the maximal volume submatrices. Our main results consist of an improvement of the classic estimate for matrix cross approximation and a greedy approach for finding the maximal volume submatrices. More precisely, we present a new proof of the classic estimate of the inequality with an improved constant. Also, we present a family of greedy maximal volume algorithms to improve the computational efficiency of matrix cross approximation. The proposed algorithms are shown to have theoretical guarantees of convergence. Finally, we present two applications: image compression and the least squares approximation of continuous functions. Our numerical results at the end of the paper demonstrate the effective performance of our approach.
\end{abstract} 

\noindent
{\bf Keywords.} Matrix cross approximation, Low rank integer matrix, Greedy maximal volume algorithm, Matrix completion and compression, Least squares approximation
\\
\\
{\bf AMS subject classifications.} 65F55, 15A83, 15A23, 05C30

\section{Introduction}\label{sec1}

Low rank matrix approximation has been one of the vital topics of research for decades. It has a wide range of applications in the areas such as signal processing \cite{Cai20,Candes07}, matrix completion \cite{Cai23,Candes12,Wang15}, data compression \cite{Fazel08,Mitrovic13}, just to name a few. One obvious way to obtain such an approximation is to use singular value decomposition (SVD), which has been the most commonly used technique in low rank approximation. However, the drawbacks of SVD are that it can only express data in some abstract basis (e.g. eigenspaces) and hence may lose the interpretability of the physical meaning of the rows and columns of a matrix. Also, the computation of eigen-decomposition of matrices can be burdensome for large-scale problems.

It is also common to use  low rankness for compressing images. For example, if a black-and-white image is of rank 1, we only need to use one column and one row to represent this image instead of using the entire image. Although one can use SVD to reduce a matrix into small blocks, the integer property of the image will be lost. That is, the decomposed image is no longer of integer entries which will take more bits for an encoder to compress it. It is desired to have a low-rank integer matrix to approximate an integer image. One way to do it is to use the so-called matrix cross approximation as explained in this paper.

Another application of the matrix cross approximation is to solve the 
least squares approximation problem which is commonly encountered in practice. More precisely, let us 
say we need to solve the following least squares problem:
\begin{equation} \label{lsq}
    \min_{\mathbf{x}}\|A\mathbf{x}-\mathbf{b}\|_2
\end{equation}
for a given matrix $A$ of size $m\times n$ and the right-hand side $\mathbf{b}$. When $m\gg n$ is very
large and/or the singular value $\sigma_n(A)\approx 0$, one way to simplify the computation is to approximate $A$ by its matrix cross approximation and then solve a much smaller size least squares problem or a better conditioned least squares problem. This technique is particularly useful when we  deal with multiple least squares tasks with same data matrix and different response vectors. It helps to speed up the computation without sacrificing the interpretability of least squares parameters.

Let us recall the matrix cross approximation as follows. Given a matrix $A \in \mathbb{R}^{m\times n}$, letting  $I = \{i_1,\cdots,   i_k\}$ and $J = \{j_1, \cdots,  j_k\}$ 
be the row and column indices,   we denote by $A_{I,:}$  the all the rows of 
$A$ with  row indices in $I$ and $A_{:,J}$  the all the columns of $A$ with column indices in $J$. Assume that 
the submatrix   $A_{I;J}$ situated on rows $I$ and columns $J$ is nonsingular.  Then 
\begin{equation}
\label{skeleton}
A_{:,J} A_{I,J}^{-1} A_{I,:} \approx A
\end{equation}
is called a skeleton or cross approximation of $A$. The left-hand side of (\ref{skeleton}) is also called CUR decomposition in the literature.
Note that if $\#(I)=\#(J)=1$, it is the classic skeleton approximation. 
There is  plenty of study on the cross approximation and CUR decomposition of matrices. See \cite{GH17,Goreinov97,GOSTZ08,GT01,Hamm20,KS16,Mahoney09,MO18}
and the literature therein. 
The study is also generalized to the setting of tensors.  See, e.g. \cite{Cai21,Mahoney06,OT10}.  

There are different strategies to obtain such an approximations or decomposition in the literature. These include volume optimization \cite{Goreinov97, GOSTZ08, GT01}, rank and spectrum-revealing factorization \cite{gu1996efficient,anderson2017efficient}, random sampling \cite{Cai23, Hamm20b}, leverage scores computing \cite{Boutsidis14, Drineas08, Mahoney09}, and empirical interpolation \cite{Chaturantabut10, Drmac16, Sorensen16}. The approach we will be focusing on falls into the first category.

To state the main results in the paper, let us recall the definition of Chebeshev norm. For any matrix $A=[a_{ij}]_{1\leq i\leq m, 1\leq j\leq n}$, $\|A\|_C=\max_{i,j}|a_{ij}|$ is called Chebyshev norm for matrix $A$. Also, the volume of a square matrix $A$ is just the absolute value of the determinant of $A$. We start with an error estimate of the matrix cross approximation from \cite{GT01}.
\begin{theorem} [cf. \cite{GT01}]
\label{GT01}
Let $A$ be a matrix of size $m\times n$. Fix $r \in [1, \min\{m,n\})$. Suppose that $I \subset \{1,\cdots,m\}$ with $\#(I)=r$ and 
$J\subset \{1, \cdots, n\}$ with $\#(J)=r$ such that 
$A_{I,J}$ has the maximal volume among all $r \times r$ submatrices of $A$.  
Then the Chebyshev norm of the residual matrix satisfies the following estimate: 
\begin{equation}
\label{skapp}
\|A - A_r\|_C \le (r + 1)\sigma_{r+1}(A),
\end{equation}
where $A_r= A_{:,J} A_{I,J}^{-1} A_{I,:}$ is the $r$-cross approximation of $A$, 
$\sigma_{r+1}(A)$ is the $(r+1)^{th}$ singular value of $A$.
\end{theorem}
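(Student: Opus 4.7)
The plan is to adapt the classical three-step Goreinov-Tyrtyshnikov argument: reduce to entries outside $I \times J$, express each such residual via a Schur-complement identity, and then bound the resulting determinantal ratio using the maximum-volume property together with SVD machinery.

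First, I would show $(A - A_r)_{ij} = 0$ whenever $i \in I$ or $j \in J$. For $i \in I$, the $i$-th row of $A_{:,J}$ coincides with a row of $A_{I,J}$, so $(A_{:,J}A_{I,J}^{-1})_{i,:}$ is the corresponding standard basis vector of $\mathbb{R}^r$, which makes $(A_r)_{i,:} = A_{i,:}$. The column case $j \in J$ is symmetric. Thus the Chebyshev norm $\|A - A_r\|_C$ is attained on some $(i,j) \in I^c \times J^c$.

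Second, for such $(i,j)$ I would form the $(r+1) \times (r+1)$ bordered submatrix
\[
B = \begin{pmatrix} A_{I,J} & A_{I,j} \\ A_{i,J} & A_{ij} \end{pmatrix}
\]
and invoke the Schur-complement formula $\det(B) = \det(A_{I,J}) \cdot \bigl(A_{ij} - A_{i,J} A_{I,J}^{-1} A_{I,j}\bigr) = \det(A_{I,J})\cdot(A-A_r)_{ij}$. So it suffices to prove $|\det(B)/\det(A_{I,J})| \le (r+1)\sigma_{r+1}(A)$ for every such bordered submatrix $B$.

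Third, I would bound the ratio. Split $A = A^{(r)} + E$ where $A^{(r)}$ is the truncated rank-$r$ SVD of $A$ and $\|E\|_2 = \sigma_{r+1}(A)$, inducing a split $B = B^{(r)} + B_E$. Since $B^{(r)}$ is a submatrix of a rank-$r$ matrix, $\det(B^{(r)}) = 0$. Expanding $\det(B)$ by multilinearity in the rows yields a sum over nonempty $S \subseteq \{1,\ldots,r+1\}$ of terms $\det(M_S)$, where $M_S$ takes rows from $B_E$ at positions in $S$ and from $B^{(r)}$ elsewhere. The leading contributions come from singletons $|S|=1$: expanding such a $\det(M_S)$ along the perturbed row, applying $\|E\|_2 \le \sigma_{r+1}(A)$, and bounding the surviving $r \times r$ minors by the max-volume scale $|\det(A_{I,J})|$ should give a contribution of size $\sigma_{r+1}(A)\cdot|\det(A_{I,J})|$ per singleton, summing to $(r+1)\sigma_{r+1}(A)|\det(A_{I,J})|$. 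Higher-order subsets $|S| \ge 2$ contribute at order $\sigma_{r+1}(A)^{|S|}$ and should be absorbed.

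The main obstacle, and the source of the constant $(r+1)$, lies in this last step: the maximum-volume property is stated for submatrices of $A$, whereas after the SVD split it is submatrices of $A^{(r)}$ that appear in the expansion; also, a naive triangle inequality across all $2^{r+1}-1$ subsets would yield a much weaker constant. A careful re-expansion using $E = A - A^{(r)}$ to relate $A^{(r)}$-minors back to $A$-minors should absorb the cross terms. Alternatively, one can bypass the multilinear expansion entirely and estimate $|\det(B)|$ directly via the Cauchy interlacing inequality $\sigma_k(B) \le \sigma_k(A)$, coupled with a bound $\prod_{k=1}^r \sigma_k(B) \le (r+1)|\det(A_{I,J})|$ derived from the max-volume property applied to all $r \times r$ minors inside $B$. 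The improved constant advertised in the paper presumably comes from refining this accounting and exploiting cancellations that the crude multilinear expansion discards.
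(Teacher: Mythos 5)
Your first two steps (the residual vanishes on rows in $I$ and columns in $J$, and the identification $(A-A_r)_{ij}=\det(B)/\det(A_{I,J})$ via the Schur complement of the bordered matrix $B$) are exactly the paper's setup. The gap is in the third step as you primarily propose it: after splitting $A=A^{(r)}+E$, the $r\times r$ minors that appear in the multilinear expansion are minors of $A^{(r)}$ or of mixed rows, not minors of $A$, so the maximal-volume hypothesis gives no control over them; and even granting such control, the $2^{r+1}-1$ terms of the expansion do not visibly collapse to a factor of $r+1$. You acknowledge this obstacle yourself, and I do not see how to repair that route without effectively abandoning it for your alternative.

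Your alternative, however, is essentially the paper's proof and does close once you supply the one identity you left unproved. Write $|\det B|=\prod_{k=1}^{r+1}\sigma_k(B)$. By Cauchy--Binet, $\sum_{|S|=r}\prod_{k\in S}\sigma_k(B)^2$ equals the sum of squares of all $(r+1)^2$ minors of $B$ of size $r\times r$; each such minor is an $r\times r$ submatrix of $A$, hence bounded in modulus by $|\det(A_{I,J})|$, and since this sum dominates its largest term $\prod_{k=1}^r\sigma_k(B)^2$, you get $\prod_{k=1}^r\sigma_k(B)\le (r+1)\,|\det(A_{I,J})|$ --- the inequality you asserted. Combined with $\sigma_{r+1}(B)\le\sigma_{r+1}(A)$ from interlacing, this gives $|\det B|\le (r+1)\,|\det(A_{I,J})|\,\sigma_{r+1}(A)$, which is the theorem. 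The paper packages the identical computation through $B^{-1}$: its entries are cofactors over $\det B$, so $\|B^{-1}\|_C=|\det(A_{I,J})|/|\det B|$ by maximality (the $(1,1)$ cofactor is $\det(A_{I,J})$ itself), and then $1\le\sigma_{r+1}(B)\|B^{-1}\|_2\le\sigma_{r+1}(B)\|B^{-1}\|_F\le (r+1)\sigma_{r+1}(B)\|B^{-1}\|_C$. The adjugate phrasing has the advantage that retaining the exact value of $\|B^{-1}\|_F^2=\sum_k\sigma_k^{-2}(B)$, rather than the crude bound $(r+1)^2\|B^{-1}\|_C^2$, is precisely what produces the paper's sharpened estimate (\ref{myskapp}); in your language this corresponds to keeping the full sum $\sum_{|S|=r}\prod_{k\in S}\sigma_k(B)^2$ instead of only its largest term.
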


The above result was established in \cite{GT01}. Recently, there were few studies to give better estimate than the one in (\ref{skapp}) or 
 or generalization of the estimate in (\ref{skapp}). See \cite{HH23,osinsky2018pseudo} and the references therein. 
We are also interested in a better estimate than the one in (\ref{skapp}). 
It turns out that we are able to give another proof of the result in \cite{HH23} 
based on the properties of matrices and a recent result about the 
determinant of matrix
(cf. \cite{GH17}). That is, we are able to improve this error bound from (\ref{skapp}) to (\ref{myskapp}). 
It is clear that a submatrix with the maximal volume as described in 
Theorem~\ref{GT01} is hard to find. There are a few algorithms 
available in the literature. 
In this paper, we propose a family of greedy algorithms to speed up the computation of submatrices with the maximal volume which has a theoretical guarantee of convergence. Finally, in the paper, 
we present two applications:
one is for image compression and the other one is for least squares approximation. These two applications are standard
for any matrix computational algorithms. See, e.g. \cite{anderson2017efficient}.   
Indeed, when $A$ is an image with integer entries,  one can encode 
$A_{I,:}, A_{:,J}$ instead of the entire image $A$ to send/store the 
image.  After receiving integer matrices $A_{I,:}$ and $A_{:,J}$, 
one can decode them and compute according to the left-hand side of (\ref{skeleton})  which will be a good approximation of $A$ 
if $I$ and $J$ are appropriately chosen by using the greedy algorithms to be discussed in this paper.
We shall present a family of greedy algorithms for fast computing the maximal volume of submatrices of $A$ for finding these row and column indices $I$ 
and $J$. Their convergences and numerical performances will also
be discussed in a later section.  

Next we consider a least squares problem of large size in (\ref{lsq}) with matrix $A$ 
which may not be full rank, i.e. $\sigma_n(A)=0$. If $\sigma_{r+1}(A)\approx 0$ for $r+1\leq n$ while $\sigma_r(A)$
is large enough, we can use the matrix cross approximation $A_{:,J}A_{I,J}^{-1}A_{I,:}$ to approximate $A$ with $\#(I)=\#(J)=r$. Then the least squares problem of large size can be solved by finding
\begin{equation}
\min_{\hat{\mathbf{x}}}\|A_{:,J}A_{I,J}^{-1}A_{I,:}\mathbf{\hat{x}}-\mathbf{b}\|_2.
\end{equation}
Note that letting $\hat{\mathbf{x}}$ be the new 
least squares solution, it is easy to see 
\begin{equation}
\|A_{:,J}A_{I,J}^{-1}A_{I,:}\mathbf{\hat{x}}-\mathbf{b}\|_2
\leq \|A\mathbf{x}_b-\mathbf{b}\|_2+\|(A_{:,J}A_{I,J}^{-1}A_{I,:}-A)\mathbf{x}_b\|_2,
\end{equation} 
where $\mathbf{x}_b$ is the original least squares solution. The first term on the right-hand side is the norm of the standard least squares residual vector while the second term is nicely bounded above if $A_{:,J}A_{I,J}^{-1}A_{I,:}$ is a good approximation of $A$, i.e., $\sigma_{r+1}(A)\approx 0$. We shall discuss more towards this aspect in the last section.

\section{Improved error bounds}
We now see that the matrix cross approximation  is useful and the approximation in (\ref{skapp}) is very crucial. It is important to improve its estimate. Let us recall the
following results to explain the cross decomposition, i.e., an estimate of the residual matrix $E= A - A_{:,J} A_{I,J}^{-1} A_{I,:}$.

For $I=[i_1, \cdots, i_r]$ and $J=[j_1, \cdots, j_r]$, let 
\begin{equation}
\label{eij}
\mathcal{E}_{ij} = \left[
\begin{matrix} A_{ij} & A_{i,j_1} & \cdots & A_{i, j_r}\cr 
A_{i_1,j} & A_{i_1,j_1} & \cdots & A_{i_1,j_r}\cr 
\vdots & \vdots & \ddots & \vdots \cr
A_{i_k,j} & A_{i_k,j_1} & \cdots & A_{i_k,j_r}\cr
\end{matrix} \right]
\end{equation}
for all $i=1, \cdots, m$ and $j=1, \cdots, n$.  The researchers in \cite{GH17} proved the following.
\begin{theorem} [cf. \cite{GH17}]
\label{SkeletonError}
Let 
\begin{equation}
\label{eq:SkeletonError}
E= A - A_{:,J} A_{I,J}^{-1} A_{I,:}. 
\end{equation}
Then each entry $E_{ij}$ of $E$ is given by 
\begin{equation}
\label{key}
E_{ij}= \frac{ \det \mathcal{E}_{ij}}{\det A_{I,J}}
\end{equation}
for all $i=1, \cdots, m$ and $j=1, \cdots, n$. 
\end{theorem}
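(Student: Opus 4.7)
The plan is to recognize $\mathcal{E}_{ij}$ as a $2\times 2$ block matrix with the invertible block $A_{I,J}$ in the lower-right corner, and then apply the Schur complement determinant formula to read off $E_{ij}$ directly.

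First I would rewrite the definition (\ref{eij}) in block form as
\begin{equation*}
\mathcal{E}_{ij}=\begin{pmatrix} A_{ij} & A_{i,J}\\ A_{I,j} & A_{I,J}\end{pmatrix},
\end{equation*}
where $A_{i,J}$ denotes the $1\times r$ row vector with entries $A_{i,j_s}$ and $A_{I,j}$ denotes the $r\times 1$ column vector with entries $A_{i_t,j}$. Since $A_{I,J}$ is nonsingular by assumption, the standard Schur complement determinant identity gives
\begin{equation*}
\det \mathcal{E}_{ij} = \det(A_{I,J})\cdot\bigl(A_{ij}-A_{i,J}\,A_{I,J}^{-1}\,A_{I,j}\bigr).
\end{equation*}
This identity is routine; it follows, for instance, by performing the block row operation that subtracts $A_{i,J}A_{I,J}^{-1}$ times the bottom block row from the top row, producing a block upper-triangular matrix whose diagonal blocks are the scalar $A_{ij}-A_{i,J}A_{I,J}^{-1}A_{I,j}$ and the matrix $A_{I,J}$.

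Next I would identify the scalar in parentheses above as exactly $E_{ij}$. Indeed, the $i$-th row of $A_{:,J}$ is $A_{i,J}$ and the $j$-th column of $A_{I,:}$ is $A_{I,j}$, so the $(i,j)$-entry of the product $A_{:,J}A_{I,J}^{-1}A_{I,:}$ equals $A_{i,J}A_{I,J}^{-1}A_{I,j}$. Subtracting this from $A_{ij}$ and dividing the displayed determinant identity by $\det A_{I,J}$ then yields (\ref{key}).

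I do not expect any real obstacle: the argument is an application of the Schur complement formula, and the only care needed is matching the indexing conventions in the bordered matrix (\ref{eij}) with the row/column extractions $A_{i,J}$ and $A_{I,j}$. As a sanity check, whenever $i\in I$ or $j\in J$, the matrix $\mathcal{E}_{ij}$ has two identical rows or columns, forcing $\det\mathcal{E}_{ij}=0$, which is consistent with the fact that $E$ vanishes on the chosen rows and columns of $A$.
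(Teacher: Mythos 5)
Your argument is correct and is essentially the route the paper takes: the paper cites \cite{GH17} for the statement but then evaluates $\det\mathcal{E}_{ij}$ via the same Schur determinant identity (its Lemma~\ref{det}) to extract $E_{ij}=A_{ij}-A_{i,J}A_{I,J}^{-1}A_{I,j}$. The only nit is that your block row operation produces a block \emph{lower}-triangular matrix, not upper-triangular, which of course does not affect the determinant computation.
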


Notice that $\det(\mathcal{E}_{ij})=0$ for all $i\in I$ and for all $j\in J$. That is, the entries of $E$ are all zero except
for $(i,j), i\not\in I$ and $j\not\in J$. 
Since the SVD approximation is optimal, it is easy to see that a matrix cross approximation is not as good as using SVD in Frobenius norm. However, 
some cross approximations can be as good as using SVD if $A_{I,J}$ has a maximal volume which is defined by
\begin{equation}
\label{vol}
\hbox{vol}(A) =\sqrt{\det (A A^*)}
\end{equation}
for any matrix $A$ of size $m\times n$ with $m\le n$. Note that the concept of
the volume of a matrix defined in this paper is slightly different from the one in 
\cite{BI92}, \cite{P00}, and \cite{SG20}. In the 
literature mentioned previously, the volume is the product of all singular values of the matrix
$A$. However, the value of volumes in the two definitions is the same. The researchers in  \cite{P00}, and \cite{SG20} used the concept of the maximal volume to reveal the rank of a matrix.  

We turn our interest to matrix approximation. The estimate in (\ref{skapp}) is improved to (\ref{myskapp}) in this paper. Although in a different formulation, this result is the same as the one in \cite{HH23}. We shall give a proof which is simpler than the one in \cite{HH23} and derive a few corollaries based on the decaying property of singular values. We also point out in Remark~\ref{rmk} that the standard assumption made in our improved result can be further weakened.
\begin{theorem}
\label{mjlai06062021}
Under the assumption in Theorem~\ref{GT01}, we have the following estimate: 
\begin{equation}
\label{myskapp}
\|A - A_r\|_C \le \frac{(r+1)\sigma_{r+1}(A)}{\sqrt{1+\sum_{k=1}^{r}\frac{\sigma_{r+1}^2(A)}{\sigma_k^2(A)}}}.
\end{equation}
\end{theorem}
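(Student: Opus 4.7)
The plan is to exploit the exact formula $E_{ij}=\det(\mathcal{E}_{ij})/\det(A_{I,J})$ from Theorem~\ref{SkeletonError} by analyzing the inverse of $\mathcal{E}_{ij}$ rather than performing a Laplace expansion of its determinant (as in the Goreinov-Tyrtyshnikov proof of Theorem~\ref{GT01}). For $i\in I$ or $j\in J$ the entry $E_{ij}$ already vanishes, so it suffices to bound $|E_{ij}|$ when $i\notin I,j\notin J$. Set $C=\mathcal{E}_{ij}$ and assume $E_{ij}\neq 0$, so $C$ is an invertible $(r+1)\times(r+1)$ submatrix of $A$ with $|\det C|=|\det A_{I,J}|\cdot|E_{ij}|$.

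The central step is to show that \emph{every} entry of $C^{-1}$ is bounded in absolute value by $1/|E_{ij}|$. By Cramer's rule,
\[
(C^{-1})_{k\ell}=\pm\frac{\det(C^{(\ell,k)})}{\det(C)},
\]
where $C^{(\ell,k)}$ denotes the $r\times r$ minor of $C$ obtained by deleting row $\ell$ and column $k$. Each such minor is itself an $r\times r$ submatrix of $A$, so the maximal-volume hypothesis on $A_{I,J}$ gives $|\det(C^{(\ell,k)})|\le|\det A_{I,J}|$ for every pair $(\ell,k)$. Dividing by $|\det C|$ yields $|(C^{-1})_{k\ell}|\le 1/|E_{ij}|$ uniformly, and summing over all $(r+1)^2$ entries produces $\|C^{-1}\|_F^2\le(r+1)^2/E_{ij}^2$.

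To convert this Frobenius estimate into singular-value information, combine the identity $\|C^{-1}\|_F^2=\sum_{k=1}^{r+1}1/\sigma_k^2(C)$ with the standard submatrix inequality $\sigma_k(C)\le\sigma_k(A)$ (which is immediate since $C$ is obtained from $A$ by row and column selection). These together give $\sum_{k=1}^{r+1}1/\sigma_k^2(A)\le(r+1)^2/E_{ij}^2$. The algebraic identity
\[
\sigma_{r+1}^2(A)\sum_{k=1}^{r+1}\frac{1}{\sigma_k^2(A)}=1+\sum_{k=1}^{r}\frac{\sigma_{r+1}^2(A)}{\sigma_k^2(A)}
\]
then rearranges the inequality into precisely (\ref{myskapp}), and taking the maximum over $(i,j)$ delivers the Chebyshev-norm bound.

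The main obstacle is the second step: recognizing that maximal volume of $A_{I,J}$ constrains \emph{all} $r\times r$ minors of $C$ — not merely the single one aligned with $I$ and $J$ — which is what upgrades a single-entry bound on $(C^{-1})_{r+1,r+1}=1/E_{ij}$ into a full Frobenius bound on $C^{-1}$ and thereby introduces the $\sum_{k=1}^{r+1}1/\sigma_k^2(A)$ term that sharpens the classical constant. Once this uniform entrywise bound on $C^{-1}$ is secured, the remaining ingredients (Cramer's rule, the Frobenius formula for $\|C^{-1}\|_F^2$, and submatrix singular-value interlacing) are entirely routine.
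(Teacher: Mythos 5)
Your proof is correct and follows essentially the same route as the paper: both hinge on the identity $E_{ij}=\det(\mathcal{E}_{ij})/\det(A_{I,J})$, the observation that maximal volume bounds every cofactor of $\mathcal{E}_{ij}$ so that $\|\mathcal{E}_{ij}^{-1}\|_C\le 1/|E_{ij}|$, the relation $\|\mathcal{E}_{ij}^{-1}\|_F^2=\sum_k\sigma_k^{-2}(\mathcal{E}_{ij})$ together with $\|\cdot\|_F\le(r+1)\|\cdot\|_C$, and singular-value interlacing $\sigma_k(\mathcal{E}_{ij})\le\sigma_k(A)$. Your version is a slight streamlining in that it goes directly to the Frobenius norm rather than detouring through $\|\mathcal{E}_{ij}^{-1}\|_2$ and Lemma~\ref{C=AB}, but the substance is identical.
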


The improvement from (\ref{skapp}) to (\ref{myskapp}) is essential. For example, if the singular values do not decay too fast, say $\sigma_k(A)\approx 1/k$, then the bound in (\ref{skapp}) does not converge at all when $r\to\infty$, while the bound in (\ref{myskapp}) does converge.

It is worthwhile to mention that the authors in \cite{osinsky2018pseudo} gave  another estimate without the factor $r+1$ based on a larger cross matrix $A_r$ of size ${p\times q}$, $p\geq r$, $q\geq r$. Although their estimate can be as good as twice of $\sigma_{r+1}(A)$, it is not better in general. For example, if $p=q=r$, then their estimate is the same as (\ref{skapp}).

In the principle component analysis (PCA), a matrix may have a big leading singular value while the rest of the singular values are very small, that is, $\sigma_1(A)\gg \sigma_k(A), k\ge 2$. For such a matrix $A$, say $\sigma_2(A)$ is only half of $\sigma_1(A)$. Using the cross approximation with $r=1$,  
the estimate on the right-hand 
side of (\ref{myskapp}) is about $1.789\sigma_{2}(A)$, while the estimate on the right-hand side of (\ref{skapp}) is $2\sigma_2(A)$.   Note that the computation of the matrix cross approximation when $r=1$ is extremely simple which is an advantage over the standard PCA, the latter requires the power method to find the leading singular value and two singular vectors.  

From the result above, we can see 
$A_r= A_{:,J} A_{I,J}^{-1}A_{I,:}$ is a good approximation of $A$ similar to the SVD approximation of $A$ when $A_{I,J}$ is a maximal volume submatrix and the rank $r$ is not too big. However, finding $A_{I,J}$ is not trivial when $r>1$, although it is easy to see that a maximal volume submatrix $A_{I,J}$ always exists since there are finitely many submatrices of size 
$r\times r$ in $A$ of size $n\times n$. As $n\gg 1$, the total number of choices of submatrices of size $r\times r$ increases
combinatorially large.  Computing the $A_{I,J}$ is an NP hard problem (cf. \cite{CI09}). To help find such indices $I$ and $J$, the following concept of dominant matrices is introduced according to the literature (cf. \cite{GOSTZ08,GT01,MO18}).  

\begin{definition}
Let $A_{\square}$ be 
a $r \times r$ nonsingular submatrix  of the given rectangular $n \times r$ matrix $A$, where 
$n>r$.  $A_{\square}$ is  dominant  if all the entries of 
$A A^{-1}_{\square}$ are not greater than 1 in modulus.
\end{definition}

Two basic results on dominant matrices can be found in \cite{GOSTZ08}.  
\begin{lemma} 
\label{GOSTZ08}
For $n \times r$ matrix $A$, a maximum volume $r \times r$ submatrix $A_\diamondsuit$ is dominant.
\end{lemma}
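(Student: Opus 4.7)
The plan is to argue by contradiction using a single-row swap and the multilinearity/alternating property of the determinant. Set $B = A A_\diamondsuit^{-1}$, which is an $n \times r$ matrix. Let $I = \{i_1,\dots,i_r\}$ be the row indices of $A$ that form $A_\diamondsuit$, so that $A_\diamondsuit = A_{I,:}$. Since $A_{I,:}A_\diamondsuit^{-1} = I_r$, the rows of $B$ indexed by $I$ are exactly the standard basis vectors, so entries of $B$ on those rows are either $0$ or $1$ and satisfy the bound trivially. I only need to handle rows $i \notin I$.

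For such an $i$, I would write
\begin{equation*}
A_{i,:} = \bigl(A_{i,:}A_\diamondsuit^{-1}\bigr)A_\diamondsuit = \sum_{\ell=1}^{r} B_{i,\ell}\, A_{i_\ell,:},
\end{equation*}
which expresses the $i$-th row of $A$ as a linear combination of the rows of $A_\diamondsuit$ with coefficients $B_{i,\ell}$. Now fix any column index $k \in \{1,\dots,r\}$ and form the submatrix $A_\diamondsuit^{(i,k)}$ obtained from $A_\diamondsuit$ by replacing its $k$-th row $A_{i_k,:}$ with $A_{i,:}$. This is again an $r\times r$ submatrix of $A$.

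Expanding by multilinearity of the determinant in the $k$-th row,
\begin{equation*}
\det\bigl(A_\diamondsuit^{(i,k)}\bigr) = \sum_{\ell=1}^{r} B_{i,\ell}\,\det\bigl(M_\ell\bigr),
\end{equation*}
where $M_\ell$ is the matrix obtained from $A_\diamondsuit$ by replacing its $k$-th row with $A_{i_\ell,:}$. For $\ell \ne k$, the matrix $M_\ell$ has two identical rows (the row $A_{i_\ell,:}$ appears both at position $\ell$ and at position $k$), so $\det(M_\ell) = 0$ by the alternating property. For $\ell = k$, $M_k = A_\diamondsuit$. Hence $\det(A_\diamondsuit^{(i,k)}) = B_{i,k}\det(A_\diamondsuit)$, and passing to absolute values gives $\mathrm{vol}(A_\diamondsuit^{(i,k)}) = |B_{i,k}|\,\mathrm{vol}(A_\diamondsuit)$.

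If $|B_{i,k}| > 1$, this strictly enlarges the volume, contradicting maximality of $A_\diamondsuit$. Therefore $|B_{i,k}| \le 1$ for every $i$ and every $k$, which is precisely the statement that $A_\diamondsuit$ is dominant. There is no real obstacle in this argument; the only thing to be careful about is formulating the row-replacement operation cleanly so that the multilinearity/alternating cancellation is transparent, since that is what drives the whole proof.
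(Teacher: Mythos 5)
Your proof is correct and complete: the identity $\det(A_\diamondsuit^{(i,k)}) = B_{i,k}\det(A_\diamondsuit)$ obtained from multilinearity is exactly the right mechanism, and the contradiction with maximality follows immediately. The paper itself states this lemma without proof (deferring to the cited reference), but the same row-swap determinant-ratio fact is precisely what the paper invokes later in the convergence proof of its greedy algorithm, so your argument is the standard one and matches the approach implicit in the paper.
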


On the other hand, a dominant matrix is not too far away from the maximal volume matrix $A_\diamondsuit$ when $r>0$ as shown in the following.
\begin{lemma}
For any  $n \times r$ matrix $A$ with full rank, we have
\begin{equation}
\label{lowbound}
|\det (A_{\square})|\ge |\det(A_\diamondsuit)|/r^{r/2}
\end{equation}
for all dominant $r \times r$ submatrices $A_\square$ of $A$, where $A_\diamondsuit$ is 
a submatrix of $r\times r$ with maximal volume.  
\end{lemma}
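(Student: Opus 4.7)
The plan is to exploit the dominance of $A_\square$ directly, together with Hadamard's inequality applied to the $r\times r$ submatrix of $AA_\square^{-1}$ indexed by the rows of $A_\diamondsuit$.

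First I would record what dominance gives us: every entry of the $n\times r$ matrix $B := A A_\square^{-1}$ has modulus at most $1$. Let $I_\diamondsuit \subset \{1,\dots,n\}$ be the row index set for which $A_\diamondsuit = A_{I_\diamondsuit,:}$. Restricting $B$ to these rows yields
\[
B_{I_\diamondsuit,:} \;=\; A_{I_\diamondsuit,:}\, A_\square^{-1} \;=\; A_\diamondsuit A_\square^{-1},
\]
which is an $r\times r$ matrix all of whose entries are bounded by $1$ in modulus.

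Next I would take determinants and use multiplicativity to obtain
\[
\frac{\det(A_\diamondsuit)}{\det(A_\square)} \;=\; \det\bigl(A_\diamondsuit A_\square^{-1}\bigr).
\]
Since $A_\diamondsuit A_\square^{-1}$ has all entries bounded by $1$, each of its $r$ rows has Euclidean norm at most $\sqrt{r}$. Hadamard's inequality then yields
\[
\bigl|\det(A_\diamondsuit A_\square^{-1})\bigr| \;\le\; \prod_{i=1}^{r} \sqrt{r} \;=\; r^{r/2}.
\]
Rearranging gives $|\det(A_\square)| \ge |\det(A_\diamondsuit)|/r^{r/2}$, which is exactly the claimed inequality~(\ref{lowbound}).

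I do not foresee a real obstacle; the only small points to check are that $\det(A_\square)\neq 0$ (guaranteed because dominance is only defined for nonsingular $A_\square$) and that $\det(A_\diamondsuit)\neq 0$ (guaranteed because $A$ has full column rank, so some $r\times r$ minor is nonzero and hence the maximal one is too). The proof is essentially one line of linear algebra plus Hadamard, and the factor $r^{r/2}$ arises naturally as the sharpest Hadamard bound for an $r\times r$ matrix with unit-bounded entries.
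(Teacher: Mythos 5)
Your proof is correct, and since the paper states this lemma without proof (deferring to \cite{GOSTZ08}), your argument fills that gap with exactly the standard reasoning from that reference: dominance bounds the entries of $A_\diamondsuit A_\square^{-1}$ by $1$, and Hadamard's inequality applied to its rows gives $|\det(A_\diamondsuit)/\det(A_\square)| \le r^{r/2}$. The side remarks about nonsingularity are also handled appropriately (and indeed the inequality is trivial if $\det(A_\diamondsuit)=0$).
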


We shall use a dominant matrix to replace the maximal volume matrix $A_\diamondsuit$. In fact, in \cite{GT01}, the researchers
showed that if $A_{I,J}$ is a good approximation of $A_\diamondsuit$ with $\det(A_{I,J})=\nu \det(A_\diamondsuit)$ with $\nu\in (0,1)$, then 
\begin{equation}
\label{mjlai12062020}
\|A - A_{:,J} A_{I,J}^{-1} A_{I,:}\|_C \le \frac{1}{\nu} (r+1)\sigma_{r+1}(A).
\end{equation}
Our proof of Theorem~\ref{mjlai06062021} yield the following result, which gives an estimate when $A_{I,J}$ is a dominant matrix.
\begin{theorem}
\label{mjlai06082021}
Suppose that $A_{I,J}$ is a dominant matrix satisfying $|\det(A_{I,J})|=\nu 
|\det(A_\diamondsuit)|$ with $\nu\in (0,1]$. Then using the dominant matrix $A_{I,J}$ to 
form the cross approximation $A_r$, we have  
\begin{equation}
\label{myskapp2}
\|A - A_r\|_C \le \frac{(r+1)\sigma_{r+1}(A)}{\nu \sqrt{1+\sum_{k=1}^{r}\frac{\sigma_{r+1}^2(A)}{\sigma_k^2(A)}}}.
\end{equation}
\end{theorem}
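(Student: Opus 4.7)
\emph{Proof proposal.} My plan is to re-use the machinery already set up in the proof of Theorem~\ref{mjlai06062021} and isolate the single place where the maximal-volume hypothesis enters. By Theorem~\ref{SkeletonError}, each residual entry is
\begin{equation*}
E_{ij}=\frac{\det \mathcal{E}_{ij}}{\det A_{I,J}},
\end{equation*}
so $\|A-A_r\|_C$ factors cleanly into a numerator bound on $|\det \mathcal{E}_{ij}|$, which is the determinant of an $(r+1)\times(r+1)$ submatrix of $A$, and a denominator bound on $|\det A_{I,J}|$. The key observation is that the numerator estimate carried out in the proof of Theorem~\ref{mjlai06062021} is a property of the full matrix $A$ and of its top $r+1$ singular values alone; it does not use the optimality of $A_{I,J}$. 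Only the denominator changes in the present setting.

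Concretely, I would first fix $(i,j)$ with $i\notin I$ and $j\notin J$ (otherwise $\det \mathcal{E}_{ij}=0$ and the entry is trivially zero). Then I would import verbatim the intermediate numerator bound established in the proof of Theorem~\ref{mjlai06062021},
\begin{equation*}
|\det \mathcal{E}_{ij}| \le \frac{(r+1)\,\sigma_{r+1}(A)\,|\det A_\diamondsuit|}{\sqrt{1+\sum_{k=1}^{r}\sigma_{r+1}^2(A)/\sigma_k^2(A)}},
\end{equation*}
and divide by $|\det A_{I,J}|=\nu\,|\det A_\diamondsuit|$ rather than by $|\det A_\diamondsuit|$. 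This propagates a single factor of $1/\nu$ into the final bound, which is exactly (\ref{myskapp2}). A useful sanity check is that $\nu=1$ recovers (\ref{myskapp}) exactly.

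The main obstacle, and the one point that must be checked carefully, is to confirm that the numerator bound in the proof of Theorem~\ref{mjlai06062021} really is insensitive to whether $A_{I,J}$ is maximal or merely dominant. I expect this to be straightforward: the natural way that bound is obtained is through Hadamard-type volume inequalities together with singular value interlacing applied to the $(r+1)\times(r+1)$ submatrix $\mathcal{E}_{ij}$, and both facts depend only on $A$ itself and on $(r+1)$ generic row and column indices, not on any optimality of the selection $(I,J)$. It is worth noting that the dominance hypothesis on $A_{I,J}$ is not actually invoked anywhere inside the argument above; its role is purely upstream, namely through (\ref{lowbound}), to guarantee $\nu \ge r^{-r/2}$ so that the bound (\ref{myskapp2}) remains informative.
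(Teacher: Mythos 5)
Your proposal is correct and follows essentially the same route as the paper: the authors likewise observe that the only place maximality entered the proof of Theorem~\ref{mjlai06062021} is in identifying the largest cofactor of $\mathcal{E}_{ij}$ with $\det(A_{I,J})$, and for a merely dominant choice they bound that cofactor by $|\det(A_\diamondsuit)|=|\det(A_{I,J})|/\nu$, which propagates exactly the single factor $1/\nu$ you describe. Your closing observation that dominance itself is never invoked in the argument (only the relation $|\det(A_{I,J})|=\nu|\det(A_\diamondsuit)|$, with dominance serving upstream via (\ref{lowbound}) to keep $\nu$ bounded below) is also accurate.
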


The proofs of Theorems~\ref{mjlai06062021} and \ref{mjlai06082021} will be given in the next section. Based on Lemma~\ref{C=AB}, we will also give a different proof for Theorem~\ref{GT01} as an intermediate result. In addition
to the introduction of dominant submatrix, the researchers in \cite{GOSTZ08} also explained a computational approach called maximal volume (maxvol) algorithm, and several improvements of the algorithm were studied, e.g., see \cite{MO18,OT10}. We will study the maxvol algorithm in more detail in the section after the next.

\section{Proof of Theorems~\ref{mjlai06062021} and \ref{mjlai06082021}}
Let us begin with some elementary properties of the Chebyshev norm of matrices. Recall that for any matrix $A$ of size $(r+1)\times(r+1)$, we have $\|A\|_F=\sqrt{\sum_{i=1}^{r+1}\sigma_i^2(A)}$. Hence,
\begin{equation}
    \|A\|_F\geq \sqrt{r+1}\sigma_{r+1}(A) \quad\text{and}\quad \|A\|_F\leq \sqrt{r+1}\sigma_{1}(A).
\end{equation}
On the other hand, $\|A\|_2\leq \|A\|_F\leq\sqrt{(r+1)^2\|A\|^2_C}=(r+1)\|A\|_C$. It follows that
\begin{lemma}
    For any matrix $A$ of size $m\times n$, 
    \begin{equation}
        \sigma_1(A)\leq (r+1)\|A\|_C \quad \text{and}\quad \sigma_{r+1}(A)\leq \sqrt{r+1}\|A\|_C
    \end{equation}
\end{lemma}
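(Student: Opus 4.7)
The statement is essentially a bookkeeping consequence of the chain of inequalities already laid out in the paragraph that immediately precedes it, so my plan is simply to close that chain off carefully. Note that although the lemma is stated for a matrix of size $m\times n$, the right-hand sides involve $r+1$ and the preparatory display works with an $(r+1)\times(r+1)$ matrix; I will read the statement in that intended sense, since otherwise the bounds do not even make dimensional sense.

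First I would record the Frobenius-to-Chebyshev estimate: squaring gives
\begin{equation*}
\|A\|_F^2 \;=\; \sum_{i,j=1}^{r+1} |A_{ij}|^2 \;\leq\; (r+1)^2 \|A\|_C^2,
\end{equation*}
which is nothing more than counting the $(r+1)^2$ entries, each bounded in modulus by $\|A\|_C$. Thus $\|A\|_F \leq (r+1)\|A\|_C$.

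For the first bound, I would invoke the standard fact that the spectral norm is dominated by the Frobenius norm, $\sigma_1(A) = \|A\|_2 \leq \|A\|_F$, and concatenate with the previous step to obtain $\sigma_1(A) \leq (r+1)\|A\|_C$. For the second bound, I would use the inequality $\|A\|_F^2 = \sum_{i=1}^{r+1}\sigma_i^2(A) \geq (r+1)\sigma_{r+1}^2(A)$, which was already noted in the preceding display. Taking square roots and combining with $\|A\|_F \leq (r+1)\|A\|_C$ gives $\sqrt{r+1}\,\sigma_{r+1}(A) \leq (r+1)\|A\|_C$, so that $\sigma_{r+1}(A) \leq \sqrt{r+1}\,\|A\|_C$, as claimed.

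There is no genuine obstacle: the lemma is a one-line consolidation of two standard inequalities (smallest/largest singular value versus Frobenius norm, and Frobenius norm versus Chebyshev norm for a matrix of bounded size) that the surrounding prose had already assembled. The only mild care needed is to keep track of where the factor $\sqrt{r+1}$ versus $r+1$ enters, which is transparent once the two elementary ingredients are written down side by side.
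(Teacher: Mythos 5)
Your proof is correct and follows essentially the same route as the paper, which derives the lemma from the two elementary facts $\|A\|_F^2=\sum_{i=1}^{r+1}\sigma_i^2(A)\ge (r+1)\sigma_{r+1}^2(A)$ and $\|A\|_2\le\|A\|_F\le (r+1)\|A\|_C$ for an $(r+1)\times(r+1)$ matrix. You are also right to flag that the stated ``$m\times n$'' should be read as $(r+1)\times(r+1)$ (or at least a matrix with at most $(r+1)^2$ entries), exactly as in the paper's preceding display.
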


Let us first prove Theorem~\ref{mjlai06062021}.

\begin{proof} [Proof of Theorem~\ref{mjlai06062021}] 
From (\ref{key}), we see that 
\begin{equation}
\label{startpt1}
\|E\|_C = \max_{i,j} \frac{|\det(\mathcal{E}_{ij})|}{|\det(A_{I,J})|}.
\end{equation}
We first note that the determinant $\det(\mathcal{E}_{ij})$ can be easily found. 
Indeed, let us recall the following  \textit{Schur determinant identity}:
\begin{lemma}
\label{det}
Let $A, B, C$, and $D$ be block sub-matrices of $M$ with $A$ invertible. Then 
$$\det(
\begin{bmatrix}
A & B \\
C & D
\end{bmatrix}) = \det(A) \det(D-CA^{-1}B).$$ 
In particular, when $A$ is an $r \times r$, $B$ and $C$ are vectors, and $D$ is a scalar,  
$$
\det(
\begin{bmatrix}
A & B \\
C & D
\end{bmatrix}) = (D-CA^{-1}B)\det(A).$$
\end{lemma}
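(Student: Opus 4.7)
The plan is to prove the identity by exhibiting an explicit block LU factorization and then reducing everything to the determinants of two block-triangular matrices. Since $A$ is invertible, one writes
\[
\begin{bmatrix} A & B \\ C & D \end{bmatrix}
= \begin{bmatrix} I & 0 \\ CA^{-1} & I \end{bmatrix}
\begin{bmatrix} A & B \\ 0 & D - CA^{-1}B \end{bmatrix},
\]
which is verified by multiplying the right-hand side block by block: the $(1,1)$ block is $A$, the $(1,2)$ block is $B$, the $(2,1)$ block is $CA^{-1}\cdot A + 0 = C$, and the $(2,2)$ block is $CA^{-1}\cdot B + (D - CA^{-1}B) = D$. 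Taking determinants of both sides then turns the problem into evaluating the determinants of the two block-triangular factors.

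Next I would handle each triangular factor separately. The first factor is lower block-triangular with identity blocks on the diagonal, so its determinant is $1$. The second factor is upper block-triangular with the square blocks $A$ and $S := D - CA^{-1}B$ on the diagonal, so its determinant is $\det(A)\det(S)$. Multiplying gives the desired $\det(A)\det(D - CA^{-1}B)$, and the ``in particular'' clause follows immediately, because when $D$ is a scalar, $S$ is a scalar and $\det(S) = S$.

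The one step that genuinely needs justification is the block-triangular determinant rule: for square diagonal blocks, $\det\!\begin{bmatrix} X & Y \\ 0 & Z \end{bmatrix} = \det(X)\det(Z)$. The cleanest way is via the generalized Laplace expansion along the first $r$ columns: only the top-left $r\times r$ minor contributes, its value is $\det(X)$, and its cofactor is $\det(Z)$. An alternative, entirely elementary, argument factors the upper-triangular block matrix further as
\[
\begin{bmatrix} X & Y \\ 0 & Z \end{bmatrix}
= \begin{bmatrix} X & 0 \\ 0 & I \end{bmatrix}
\begin{bmatrix} I & X^{-1}Y \\ 0 & I \end{bmatrix}
\begin{bmatrix} I & 0 \\ 0 & Z \end{bmatrix}
\]
when $X$ is invertible (the singular case is trivial since both sides vanish, which one sees by rank considerations on the first $r$ columns), and then applies the well-known facts that $\det$ is multiplicative, that unipotent triangular matrices have determinant $1$, and that extending by identity blocks preserves determinant.

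The main obstacle here is essentially expository rather than mathematical: keeping the block sizes straight and choosing which version of the block-triangular determinant rule to invoke. Once that rule is in hand, the LU factorization above is the whole proof. I expect the write-up to be short, with the verification of the block product and the citation (or inline derivation) of the block-triangular determinant formula being the only steps worth spelling out in detail.
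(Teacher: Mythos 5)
Your proof is correct and is essentially the same as the paper's: both exhibit a block LU factorization of $\begin{bmatrix} A & B \\ C & D \end{bmatrix}$ into two block-triangular factors and take determinants (the paper uses $\begin{bmatrix} A & 0 \\ C & I \end{bmatrix}\begin{bmatrix} I & A^{-1}B \\ 0 & D-CA^{-1}B \end{bmatrix}$, a mirror-image variant of yours that distributes $A$ to the other factor). Your write-up is more careful in that it also justifies the block-triangular determinant rule, which the paper takes for granted.
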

\begin{proof}
Note that $\begin{bmatrix}
A & B \\
C & D
\end{bmatrix} =
\begin{bmatrix}
A & 0 \\
C & I
\end{bmatrix}
\begin{bmatrix}
I & A^{-1}B \\
0 & D-CA^{-1}B
\end{bmatrix}$. Taking the determinant we get the desired result.
\end{proof}

By using Lemma~\ref{det} and (\ref{eij}), we have
\begin{equation}
\label{result1}
\|E\|_C= \max_{i,j} |\det(A_{ij} - A_{i,J} A_{I,J}^{-1} A_{I,j})| 
=\max_{i,j} |A_{ij} - A_{i,J} A_{I,J}^{-1} A_{I,j}|.
\end{equation}

Next we need to estimate the right-hand side of (\ref{result1}) when $|\det(A_{I,J})|$ is largest. To do so, we consider
$\mathcal{E}_{ij}^{-1}$. Then it can be written as
\begin{equation} \label{det2}
    \mathcal{E}_{ij}^{-1} =\frac{1}{\det(\mathcal{E}_{ij})} [ \mathcal{E}_{ij}^*],
\end{equation}
where $\mathcal{E}_{ij}^*$ stands for the cofactors matrix of $\mathcal{E}_{ij}$. 
 It is easy to see that 
$\|\mathcal{E}_{ij}^{-1}\|_C = |\det(A_{I,J})|/|\det(\mathcal{E}_{ij})|$ 
by the definition of the cofactors matrix and  the $\det(A_{I,J})$ is the largest. 


By using Lemma~\ref{det}, we have
\begin{equation}
\label{result2}
\|\mathcal{E}_{ij}^{-1}\|_C = \frac{1}{|A_{ij} - A_{i,J} A_{I,J}^{-1} A_{I,j}|}.
\end{equation}

Next recall that for any matrix $A$ of size $(r+1)\times (r+1)$, we have 
$\|A\|_F = \sqrt{\sum_{i=1}^{r+1}\sigma_i^2(A)}$ and hence, 
\begin{equation}
\label{resultn1}
 \|A\|_F  \ge \sqrt{r+1}\sigma_{r+1}(A) \hbox{ and }
\|A\|_F\le \sqrt{r+1}\sigma_1(A).  
\end{equation} 
On the other hand, $\|A\|_2\le \|A\|_F \le \sqrt{ (r+1)^2 \|A\|_C^2} = (r+1)\|A\|_C.$ 
It follows
\begin{equation}
\label{result3}
\sigma_{1}(A)\le (r+1) \|A\|_C \hbox{ and } 
\sigma_{r+1}(A)\le \sqrt{r+1}\|A\|_C.
\end{equation}  
Furthermore, we have 
\begin{lemma}
\label{C=AB}
Let $B$ be a matrix of size $m\times n$ with singular values $s_1\ge s_2\ge \cdots \ge s_m$ 
and let $C=AB$ with singular values $t_1\ge \cdots \ge t_m$. Then 
$$
t_i\le s_i \|A\|_2, \quad i=1, \cdots, n
$$ 
\end{lemma}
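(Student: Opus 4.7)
The plan is to deduce the bound $t_i \le \|A\|_2 \, s_i$ from the Courant--Fischer (maximin) characterization of singular values, combined with the elementary submultiplicative inequality defining the spectral norm. The result is the classical fact that multiplying by $A$ on the left can only inflate each singular value by at most a factor of $\|A\|_2$.

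First I would recall that for any matrix $M$ with $n$ columns and any valid index $i$,
$$\sigma_i(M) = \max_{\substack{V \subset \mathbb{R}^n \\ \dim V = i}} \; \min_{\substack{x \in V \\ \|x\|_2 = 1}} \|Mx\|_2,$$
where $V$ ranges over linear subspaces of the domain. Next, I would use the pointwise inequality $\|ABx\|_2 \le \|A\|_2 \|Bx\|_2$, which is immediate from the definition of the spectral (operator) norm $\|A\|_2$ applied to the vector $y = Bx$.

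Taking $M = AB$ in the maximin formula and pulling the constant $\|A\|_2$ through the inner minimum and the outer maximum (legitimate because the constant does not depend on $x$ or on $V$), I obtain
$$t_i = \max_{\dim V = i} \; \min_{x \in V, \|x\|_2 = 1} \|ABx\|_2 \le \|A\|_2 \max_{\dim V = i} \; \min_{x \in V, \|x\|_2 = 1} \|Bx\|_2 = \|A\|_2 \, s_i,$$
which is precisely the claim.

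The only mild point to be careful about is the justification of interchanging the scalar $\|A\|_2$ with the min and the max, but since this scalar is independent of $V$ and of $x$ the step is immediate. I do not anticipate any genuine obstacle; this is a textbook fact and the proof is only a few lines. An alternative algebraic route, which avoids invoking the variational formula, proceeds from the Loewner inequality $B^{*} A^{*} A B \preceq \|A\|_2^{\,2}\, B^{*} B$ together with the Weyl monotonicity of eigenvalues of Hermitian matrices applied to $t_i^{\,2} = \lambda_i(B^{*} A^{*} A B)$ and $s_i^{\,2} = \lambda_i(B^{*} B)$; either route reaches the conclusion quickly.
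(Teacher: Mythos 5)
Your proof is correct. It takes a genuinely different main route from the paper: you invoke the Courant--Fischer maximin characterization of singular values directly, $t_i = \max_{\dim V = i}\min_{x\in V,\ \|x\|_2=1}\|ABx\|_2$, and then pull the constant $\|A\|_2$ out via the pointwise bound $\|ABx\|_2\le\|A\|_2\|Bx\|_2$. The paper instead works entirely with Gram matrices: it writes $\|A\|_2^2 I - A^\top A = D \succeq 0$, concludes $\|A\|_2^2 B^\top B - C^\top C = B^\top D B \succeq 0$, and then reads off $t_i^2 \le \|A\|_2^2 s_i^2$ from the monotonicity of ordered eigenvalues under the Loewner order --- this is precisely the ``alternative algebraic route'' you sketch in your last sentence. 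The two arguments are close cousins (eigenvalue monotonicity under $\preceq$ is itself usually proved by Courant--Fischer), but yours has the small advantage of never squaring, so the interchange-of-min-and-max bookkeeping is done once on the singular values themselves; the paper's version has the advantage of being a two-line matrix-inequality computation that also exhibits the exact PSD defect $B^\top D B$. Both are complete; your justification for commuting the scalar $\|A\|_2$ past the min and the max is the only point needing care and you handle it correctly.
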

\begin{proof}
Consider the eigenvalues of $C^\top C= B^\top A^\top A B$. 
Let $A^\top A= Q \Sigma^2 Q^\top$ be the spectral decomposition of $A^\top A$. Let 
$D=Q(\|A\|_2^2 I- \Sigma^2)Q^\top$. Then $D$ is positive semi-definite. Since $D= \|A\|_2 I - A^\top A$, we have
$$
\|A\|_2^2 B^\top B =  B^\top (A^\top A+ D)B= C^\top C+ B^\top D B\ge C^\top C.
$$
It follows that the eigenvalue $t_i^2$ of $C^\top C$ is less than or equal to $\|A\|_2^2 s_i^2$ for each $i$. 
\end{proof}

As $I_{r+1}= \mathcal{E}_{ij}\mathcal{E}_{ij}^{-1}$, we use Lemma~\ref{C=AB} to get
\begin{eqnarray}
1 &\le& \sigma_{r+1} (\mathcal{E}_{ij}) \|\mathcal{E}_{ij}^{-1}\|_2 
\le  \sigma_{r+1} (\mathcal{E}_{ij}) \|\mathcal{E}_{ij}^{-1}\|_F  
\le  \sigma_{r+1} (\mathcal{E}_{ij}) (r+1)\|\mathcal{E}_{ij}^{-1}\|_C. 
\end{eqnarray}
It follows that
\begin{eqnarray*} 
\frac{1}{\|\mathcal{E}_{ij}^{-1}\|_C} \le (r+1) \sigma_{r+1}(\mathcal{E}_{ij}). 
\end{eqnarray*}
Combining with the result in (\ref{result2}), we have
\begin{equation} \label{ptest}
    |A_{ij} - A_{i,J} A_{I,J}^{-1} A_{I,j}|\le (r+1) \sigma_{r+1}(\mathcal{E}_{ij}). 
\end{equation}
Using (\ref{result1}), we finish a proof of Theorem~\ref{GT01} 
since $\sigma_{r+1}(\mathcal{E}_{ij}) \le \sigma_{r+1}(A)$. 


Again, we use Lemma~\ref{C=AB} to get
\begin{eqnarray} \label{prodest}
1 &\le& \sigma_{r+1} (\mathcal{E}_{ij}) \|\mathcal{E}_{ij}^{-1}\|_2 
=  \sigma_{r+1} (\mathcal{E}_{ij}) \|\mathcal{E}_{ij}^{-1}\|_F 
\sqrt{1- \frac{\sum_{k=2}^{r+1}
\sigma_k^2(\mathcal{E}_{ij}^{-1})}{\sum_{k=1}^{r+1}\sigma_k^2(\mathcal{E}_{ij}^{-1})}}
\cr
&\le& \sigma_{r+1}(\mathcal{E}_{ij}) (r+1)\|\mathcal{E}_{ij}^{-1}\|_C\sqrt{ \frac{\sigma_1^2(\mathcal{E}_{ij}^{-1})}{\sum_{k=1}^{r+1}\sigma_k^2(\mathcal{E}_{ij}^{-1})}} \cr
&=& (r+1)\|\mathcal{E}_{ij}^{-1}\|_C\sqrt{ \frac{1}{\sum_{k=1}^{r+1}\sigma_k^2(\mathcal{E}_{ij}^{-1})}}.
\end{eqnarray}

The last equality holds since $\sigma_1(\mathcal{E}_{ij}^{-1})=1/{\sigma_{r+1}(\mathcal{E}_{ij})}$. 
This implies 
\begin{eqnarray*}
    \frac{1}{\|\mathcal{E}_{ij}^{-1}\|_C} \le \frac{r+1}{\sqrt{\sum_{k=1}^{r+1}\sigma_k^2(\mathcal{E}_{ij}^{-1})}}=\frac{r+1}{\sqrt{\sum_{k=1}^{r+1}\sigma_k^{-2}(\mathcal{E}_{ij})}}.
\end{eqnarray*}
Combining with the result in (\ref{result2}), we have
\begin{eqnarray*}
    |A_{ij} - A_{i,J} A_{I,J}^{-1} A_{I,j}|\le \frac{r+1}{\sqrt{\sum_{k=1}^{r+1}\sigma_k^{-2}(\mathcal{E}_{ij})}}\le \frac{r+1}{\sqrt{\sum_{k=1}^{r+1}\sigma_k^{-2}(A)}}=\frac{(r+1)\sigma_{r+1}(A)}{\sqrt{1+\sum_{k=1}^{r}\frac{\sigma_{r+1}^2(A)}{\sigma_k^2(A)}}}. 
\end{eqnarray*}
The second inequality in the above holds because of the singular value interlacing relation. For example, see Theorem 1 in \cite{thompson1972principal}. 
This finishes the proof of Theorem~\ref{mjlai06062021}.

\end{proof}

\begin{remark} \label{rmk}
It is worthwhile to point out that the assumption we made on the index set $I$ and $J$ for Theorem~\ref{mjlai06062021} can be further weakened. In (\ref{det2}) and (\ref{result2}), we do not need to assume $A_{I,J}$ has the global maximal volume, as long as $A_{I,J}$ has the largest volume among all the submatrix $A_{\Tilde{I},\Tilde{J}}$ where $\#(\Tilde{I})=r+1$, $\#(\Tilde{J})=r+1$, $I\subset \Tilde{I}$, $J\subset\Tilde{J}$, we are able to deduce the same result.
\end{remark}

 \begin{remark}
In fact our proof of
the main theorem gives a better estimate: Let $i_0,j_0$ be the row and column index which achieves the maximum in (\ref{result1}). Then

\begin{eqnarray*}
\|A-A_r\|_{C}= 
    |A_{i_0j_0} - A_{i_0,J} A_{I,J}^{-1} A_{I,j_0}|\le 
    \frac{r+1}{\sqrt{\sum_{k=1}^{r+1}\sigma_k^{-2}(\mathcal{E}_{i_0j_0})}}<\frac{r+1}{\sqrt{\sum_{k=1}^{r+1}\sigma_k^{-2}(A)}}
    \end{eqnarray*}
as it is probable to have 
$\sigma_k(\mathcal{E}_{i_0j_0})<
\sigma_k(A)$ for some $k$.
 \end{remark}

\begin{proof} [Proof of Theorem~\ref{mjlai06082021}]
Same as (\ref{det2}), we have
$\mathcal{E}_{ij}^{-1} =\frac{1}{\det(\mathcal{E}_{ij})} [ \mathcal{E}_{ij}^*]$, where $\mathcal{E}_{ij}^*$ stands for the cofactors matrix of $\mathcal{E}_{ij}$. 
 It is easy to see that 
\begin{equation}
 \|\mathcal{E}_{ij}^{-1}\|_C = \frac{|\det(A_{I_i,J_j})|}{|\det(\mathcal{E}_{ij})|}\le \frac{|\det(A_{\diamondsuit})|}{|\det(\mathcal{E}_{ij})|}=\frac{|\det(A_{I,J})|}{\nu|\det(\mathcal{E}_{ij})|},
\end{equation}
where $I_i$ and $J_j$ corresponds to the row and column index set which has the largest determinant among $\mathcal{E}_{ij}$. By Lemma~\ref{det}, we have
\begin{equation}
\frac{1}{\|\mathcal{E}_{ij}^{-1}\|_C}\ge \nu|A_{ij} - A_{i,J} A_{I,J}^{-1} A_{I,j}|.
\end{equation}
Then carefully examining the steps of the proof above yields the desired result of 
Theorem~\ref{mjlai06082021}.  
\end{proof}

Without loss of generality, we may assume matrix $A$ is normalized such that its largest singular value equals to $1$. Let us now give a few simple estimates in the following. 

\begin{corollary}
    Assume $\sigma_1(A) =1$ and consider $r=1$. Then
    \begin{equation*}
        \|A-A_r\|_C\leq \frac{2\sigma_2(A)}{\sqrt{1+\sigma_2^2(A)}}.
    \end{equation*}
    Consequently, when $\sigma_1(A)=1$ and $\sigma_2(A)=\frac{1}{2}$, the estimate is 
    \begin{equation*}
        \|A-A_r\|_C\leq \frac{2\sigma_2(A)}{\sqrt{1+\sigma_2^2(A)}}\le \frac{2\sqrt{5}}{5}.
    \end{equation*}
\end{corollary}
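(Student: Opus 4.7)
The plan is to obtain this corollary as a direct specialization of Theorem~\ref{mjlai06062021}, since there is essentially no new content beyond substituting $r=1$ and $\sigma_1(A)=1$ into the improved bound (\ref{myskapp}).

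First I would write down (\ref{myskapp}) with $r=1$, which gives
\begin{equation*}
\|A-A_r\|_C \le \frac{2\sigma_2(A)}{\sqrt{1+\dfrac{\sigma_2^2(A)}{\sigma_1^2(A)}}}.
\end{equation*}
Using the normalization assumption $\sigma_1(A)=1$, the denominator collapses to $\sqrt{1+\sigma_2^2(A)}$, which yields the first displayed inequality. This step is purely mechanical and relies only on the main theorem already proved.

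Next, for the explicit numerical consequence, I would substitute $\sigma_2(A)=\tfrac{1}{2}$ into the bound just obtained. The numerator becomes $2\cdot\tfrac{1}{2}=1$, the denominator becomes $\sqrt{1+\tfrac{1}{4}}=\tfrac{\sqrt{5}}{2}$, and the ratio simplifies to $\tfrac{2}{\sqrt{5}}=\tfrac{2\sqrt{5}}{5}$, which is the stated bound.

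There is no genuine obstacle here; the only thing to be careful about is that the hypothesis of Theorem~\ref{mjlai06062021} (existence of a $1\times 1$ maximal-volume submatrix, i.e.\ the largest-magnitude entry of $A$) is automatic for any nonzero matrix, so no extra assumption needs to be invoked. I would also briefly remark, for comparison, that the classical bound (\ref{skapp}) would yield only $2\sigma_2(A)=1$ under the same normalization, which is strictly larger than $\tfrac{2\sqrt{5}}{5}\approx 0.894$, illustrating the sharpening achieved by the improved estimate.
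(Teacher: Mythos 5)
Your proposal is correct and matches what the paper intends: the corollary is a direct substitution of $r=1$ and $\sigma_1(A)=1$ into the bound (\ref{myskapp}) of Theorem~\ref{mjlai06062021}, followed by the evaluation at $\sigma_2(A)=\tfrac12$, which the paper leaves to the reader as a straightforward specialization. Your arithmetic and the observation that the $1\times 1$ maximal-volume submatrix is just the largest-magnitude entry are both correct.
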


\begin{corollary}
    Using the decreasing property of the singular values, we have
    \begin{equation}
   \|A-A_r\|_C\le    \frac{r+1}{\sqrt{1+\frac{r\sigma^2_{r+1}(A)}{(r+1)\|A\|^2_2}}}\sigma_{r+1}(A).
    \end{equation}
\end{corollary}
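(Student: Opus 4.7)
The plan is to derive this corollary directly from Theorem~\ref{mjlai06062021} by applying a crude but universal lower bound to each term of the sum in the denominator of
\[
\|A - A_r\|_C \le \frac{(r+1)\sigma_{r+1}(A)}{\sqrt{1+\sum_{k=1}^{r}\dfrac{\sigma_{r+1}^2(A)}{\sigma_k^2(A)}}}.
\]
Nothing deeper than the ordering $\sigma_1(A) \ge \sigma_2(A) \ge \cdots$ and the identity $\sigma_1(A) = \|A\|_2$ should be needed, so this is more a matter of arithmetic bookkeeping than of introducing new ideas.

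First, I would observe that for each index $k \in \{1,\dots,r\}$, the monotonicity of singular values yields $\sigma_k(A) \le \sigma_1(A) = \|A\|_2$, hence
\[
\frac{\sigma_{r+1}^2(A)}{\sigma_k^2(A)} \ge \frac{\sigma_{r+1}^2(A)}{\|A\|_2^2}.
\]
Summing over $k = 1,\dots,r$ gives the lower bound
\[
\sum_{k=1}^r \frac{\sigma_{r+1}^2(A)}{\sigma_k^2(A)} \ge \frac{r\,\sigma_{r+1}^2(A)}{\|A\|_2^2}.
\]
Plugging this into the denominator in Theorem~\ref{mjlai06062021} and invoking the monotonicity of $x \mapsto 1/\sqrt{1+x}$ yields
\[
\|A - A_r\|_C \le \frac{(r+1)\,\sigma_{r+1}(A)}{\sqrt{1 + \dfrac{r\,\sigma_{r+1}^2(A)}{\|A\|_2^2}}}.
\]

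Finally, since $\dfrac{r\,\sigma_{r+1}^2(A)}{\|A\|_2^2} \ge \dfrac{r\,\sigma_{r+1}^2(A)}{(r+1)\|A\|_2^2}$, a further application of the monotonicity of $1/\sqrt{1+x}$ gives the slightly weaker (and advertised) form
\[
\|A - A_r\|_C \le \frac{r+1}{\sqrt{1 + \dfrac{r\,\sigma_{r+1}^2(A)}{(r+1)\|A\|_2^2}}}\,\sigma_{r+1}(A),
\]
which is the claim. There is no genuine obstacle; the only small point worth noting in the write-up is that the argument actually produces a tighter bound (with $\|A\|_2^2$ in place of $(r+1)\|A\|_2^2$ inside the square root), and the stated form is recovered merely by a final monotonicity step. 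If desired, one could instead apply the AM--HM inequality to the $r$ terms $\sigma_1^2(A),\dots,\sigma_r^2(A)$ together with $\sum_{k=1}^r \sigma_k^2(A) \le r\|A\|_2^2$ to reach the same conclusion.
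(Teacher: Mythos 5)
Your proof is correct, but it takes a genuinely different (and shorter) route than the paper. The paper does not deduce this corollary from the final bound (\ref{myskapp}); instead it returns to the intermediate chain of inequalities for $\mathcal{E}_{ij}$ and its inverse, writing $1 \le \sigma_{r+1}(\mathcal{E}_{ij})\,(r+1)\,\|\mathcal{E}_{ij}^{-1}\|_C\bigl(1-\tfrac{r\,\sigma_{r+1}^2(\mathcal{E}_{ij})}{(r+1)\,\sigma_1^2(\mathcal{E}_{ij})}\bigr)^{1/2}$, bounding $\sigma_1(\mathcal{E}_{ij})\le\|A\|_2$, substituting $\sigma_{r+1}^2(\mathcal{E}_{ij})\ge\|E\|_C^2/(r+1)^2$ from (\ref{ptest}), and then solving the resulting implicit inequality for $\|E\|_C^2$; the extra factor $r+1$ multiplying $\|A\|_2^2$ inside the square root is an artifact of the Frobenius-versus-Chebyshev comparison used in that chain. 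Your argument instead lower-bounds each of the $r$ terms in the denominator of (\ref{myskapp}) by $\sigma_{r+1}^2(A)/\|A\|_2^2$ via $\sigma_k(A)\le\sigma_1(A)=\|A\|_2$, and therefore actually yields the strictly sharper estimate with $\|A\|_2^2$ in place of $(r+1)\|A\|_2^2$; the stated form then follows from the monotonicity of $x\mapsto 1/\sqrt{1+x}$, exactly as you note. So your route is both more elementary and quantitatively stronger, and it exposes the factor $r+1$ in the published denominator as unnecessary; the only thing the paper's longer derivation buys is that it stays entirely within the $\mathcal{E}_{ij}$ machinery rather than invoking Theorem~\ref{mjlai06062021} as a black box.
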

\begin{proof}
By Lemma~\ref{C=AB}, we get
\begin{eqnarray}
1 &\le& \sigma_{r+1} (\mathcal{E}_{ij}) \|\mathcal{E}_{ij}^{-1}\|_2 
=  \sigma_{r+1} (\mathcal{E}_{ij}) \|\mathcal{E}_{ij}^{-1}\|_F 
\sqrt{1- \frac{\sum_{k=2}^{r+1}
\sigma_k^2(\mathcal{E}_{ij}^{-1})}{\sum_{k=1}^{r+1}\sigma_k^2(\mathcal{E}_{ij}^{-1})}} 
\cr
&\le& \sigma_{r+1}(A) (r+1)\|\mathcal{E}_{ij}^{-1}\|_C\sqrt{1- \frac{r
\sigma_{r+1}^2(\mathcal{E}_{ij}^{-1})}{(r+1)\sigma_1^2(\mathcal{E}_{ij}^{-1})}}.
\end{eqnarray}
Note that $\sigma_1(\mathcal{E}_{ij}^{-1})=1/\sigma_{r+1}(\mathcal{E}_{ij})$ and $\sigma_{r+1}(\mathcal{E}_{ij}^{-1})=1/\sigma_{1}(\mathcal{E}_{ij})$,
so we have
$$
1- \frac{r
\sigma_{r+1}^2(\mathcal{E}_{ij}^{-1})}{(r+1)\sigma_1^2(\mathcal{E}_{ij}^{-1})}
= 1 - \frac{r \sigma^2_{r+1}(\mathcal{E}_{ij})}{(r+1)\sigma^2_1(\mathcal{E}_{ij})}
\le 1 - \frac{r \sigma^2_{r+1}(\mathcal{E}_{ij})}{(r+1)\|A\|^2}.
$$
From (\ref{ptest}), we can choose
the pair of index $(i,j)$ 
such that $\|A-A_r\|_C=\|E\|_C \le \sigma_{r+1}(\mathcal{E}_{ij}) (r+1)$ and hence, 
\begin{eqnarray*}
\|E\|_C^2 &\le& (r+1)^2 \sigma_{r+1}^2(A) \left(
 1 - \frac{r \sigma^2_{r+1}(\mathcal{E}_{ij})}{(r+1)\|A\|^2}\right) \cr
 &\le& (r+1)^2 \sigma_{r+1}^2(A) \left(1 - \frac{r \|E\|^2_C }{(r+1)^3\|A\|_2}\right).
 \end{eqnarray*}
 Solving $\|E\|_C^2$ from the above inequality, we have
\begin{eqnarray}
\|A-A_r\|_C=\|E\|_C \le \frac{r+1}{\sqrt{1+ \frac{r \sigma^2_{r+1}(A)}{(r+1)\|A\|^2_2}}} 
\sigma_{r+1}(A).
\end{eqnarray}

\end{proof}

Finally, we present two interesting cases when the singular values are decaying at polynomial rate and exponential rate.

\begin{corollary}
Suppose that $\sigma_k(A)\le 1/k$ for all $k\ge 1$. It follows that 
\begin{equation}
        \|A-A_r\|_C\leq \frac{r+1}{\sqrt{(r+1)^3/3}}=  \frac{\sqrt{3}}{\sqrt{r+1}}.
    \end{equation}
In the other words, we need to choose $r\ge 3$ in order for the approximation error in Chebyshev norm to be less than $1$.    
\end{corollary}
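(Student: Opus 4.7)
The plan is to combine Theorem~\ref{mjlai06062021} with two elementary inequalities, and no auxiliary machinery is needed. First I would rewrite the bound from Theorem~\ref{mjlai06062021}
\begin{equation*}
\|A-A_r\|_C \le \frac{(r+1)\sigma_{r+1}(A)}{\sqrt{1+\sum_{k=1}^{r}\sigma_{r+1}^2(A)/\sigma_k^2(A)}}
\end{equation*}
in the cleaner, symmetric form
\begin{equation*}
\|A-A_r\|_C \le \frac{r+1}{\sqrt{\sum_{k=1}^{r+1} 1/\sigma_k^2(A)}},
\end{equation*}
by absorbing the $1$ in the denominator into the sum as the $k=r+1$ term $\sigma_{r+1}^2(A)/\sigma_{r+1}^2(A)=1$ and then pulling the factor $\sigma_{r+1}(A)$ out of the square root.

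Next, I would apply the hypothesis $\sigma_k(A)\le 1/k$, which gives $1/\sigma_k^2(A)\ge k^2$, so that the sum in the denominator satisfies $\sum_{k=1}^{r+1}1/\sigma_k^2(A)\ge \sum_{k=1}^{r+1}k^2$. The final step is the arithmetic estimate $\sum_{k=1}^{r+1}k^2\ge (r+1)^3/3$, which follows either from the closed form $\sum_{k=1}^{n}k^2 = n(n+1)(2n+1)/6$ evaluated at $n=r+1$ (where one checks $(r+2)(2r+3)\ge 2(r+1)^2$, which reduces to $3r+4\ge 0$), or from the integral comparison $\sum_{k=1}^{n}k^2 \ge \int_0^n x^2\,dx = n^3/3$. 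Either route gives
\begin{equation*}
\|A-A_r\|_C \le \frac{r+1}{\sqrt{(r+1)^3/3}} = \frac{\sqrt{3}}{\sqrt{r+1}}.
\end{equation*}
The concluding remark about needing $r\ge 3$ is then immediate: solving $\sqrt{3}/\sqrt{r+1}<1$ gives $r+1>3$.

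There is no substantive obstacle in this argument; the corollary is a direct substitution into Theorem~\ref{mjlai06062021}. The only point requiring a moment of care is recognizing that the $+1$ inside the square root is precisely the missing $k=r+1$ summand, which is what makes the bound transparently compatible with the uniform hypothesis $\sigma_k(A)\le 1/k$ on all indices up to $r+1$.
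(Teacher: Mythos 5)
Your proof is correct and is exactly the intended argument: the paper leaves this corollary to the reader, but the symmetric form $\frac{r+1}{\sqrt{\sum_{k=1}^{r+1}\sigma_k^{-2}(A)}}$ you start from is precisely the expression displayed in the paper's proof of Theorem~\ref{mjlai06062021}, and your substitution $1/\sigma_k^2(A)\ge k^2$ together with $\sum_{k=1}^{r+1}k^2\ge (r+1)^3/3$ is the direct route to the stated bound. Nothing is missing.
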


\begin{corollary}
Suppose that the singular values of $A$ are
exponentially decreasing, i.e 
$\sigma_k(A)\le C\alpha^{-k}$ for a positive constant $C$ and $\alpha>1$. Then it follows that
\begin{equation}
   \|A-A_r\|_C\le  \frac{C(r+1)}{\alpha^{r+1}}.
\end{equation}
\end{corollary}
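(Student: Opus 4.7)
The plan is to derive this bound as an almost immediate consequence of Theorem~\ref{mjlai06062021}. Starting from
\begin{equation*}
\|A - A_r\|_C \le \frac{(r+1)\sigma_{r+1}(A)}{\sqrt{1+\sum_{k=1}^{r}\frac{\sigma_{r+1}^2(A)}{\sigma_k^2(A)}}},
\end{equation*}
the first step is to observe that the sum under the square root is nonnegative, so the denominator is at least $1$. This yields the looser but cleaner bound $\|A - A_r\|_C \le (r+1)\sigma_{r+1}(A)$, which is the classical estimate of Theorem~\ref{GT01}.

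Next I would substitute the exponential decay hypothesis $\sigma_{r+1}(A) \le C\alpha^{-(r+1)}$ directly into this bound, producing
\begin{equation*}
\|A - A_r\|_C \;\le\; (r+1)\sigma_{r+1}(A) \;\le\; \frac{C(r+1)}{\alpha^{r+1}},
\end{equation*}
which is exactly the claim.

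There is essentially no obstacle here; the only subtle point worth noting is that the sum inside the square root of Theorem~\ref{mjlai06062021} cannot in general be lower bounded by anything strictly larger than $1$ under the hypothesis $\sigma_k(A) \le C\alpha^{-k}$, because the assumption provides only upper bounds on the $\sigma_k(A)$ and hence no nontrivial lower bound on the ratios $\sigma_{r+1}^2(A)/\sigma_k^2(A)$. Thus discarding the denominator is the best one can do at this level of generality, and the statement of the corollary reflects precisely this trade-off.
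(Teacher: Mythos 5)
Your proof is correct and is exactly the intended argument: the paper leaves this corollary to the reader, and the natural route is to drop the denominator in Theorem~\ref{mjlai06062021} (equivalently, invoke the classical bound of Theorem~\ref{GT01}) and substitute $\sigma_{r+1}(A)\le C\alpha^{-(r+1)}$. One small inaccuracy in your closing remark: discarding the denominator is not forced by the hypotheses, because the right-hand side of (\ref{myskapp}) is increasing in $\sigma_{r+1}(A)$ for fixed $\sigma_1,\dots,\sigma_r$, so one may first substitute $\sigma_{r+1}(A)\le C\alpha^{-(r+1)}$ and then use $\sigma_k^{-2}(A)\ge C^{-2}\alpha^{2k}$ to retain a denominator of $\sqrt{1+(1-\alpha^{-2r})/(\alpha^2-1)}>1$; this yields a strictly sharper constant, though of course the corollary as stated does not claim it.
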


The proofs of these above results are straightforward, we leave them to the interested readers.

\section{Greedy Maximum Volume Algorithms} \label{secAlg}
By Theorem~\ref{mjlai06082021}, we can use a dominant submatrix for cross approximation almost as well as the maximal volume submatrix. The aim of this section is to find a dominant submatrix which is close to the submatrix with maximal volume efficiently.  We shall introduce a greedy step to improve the performance of standard maximal volume algorithm introduced in  \cite{GOSTZ08}. 
We start with the maxvol algorithm described in \cite{GOSTZ08} 
to find an approximation for a dominant $r \times r$ submatrix of an $n \times r$ matrix $M$. A detailed discussion of greedy maximum volume algorithms can also be found in \cite{A19}.


\begin{algorithm}[H]
\caption{Maximal Volume Algorithm\label{maxvolalg}}
\begin{algorithmic}[1]
\REQUIRE $n \times r$ matrix $M$, $r\times r$ submatrix $A_0$ with $\det(A_0) \neq 0$, tolerance $\epsilon > 0$
\ENSURE $A_l$ a close to dominant submatrix of $M$
\STATE{Let $l = 0, B_0 = MA_0^{-1}$ and $A_l=A_0$}
\STATE{Set $b_{ij}$ equal to the largest in modulus entry of $B_0$.}
\WHILE{$|b_{ij}| > 1 + \epsilon$}
\STATE{Replace the $j$th row of $A_l$ with the $i$th row of $M$}
\STATE{$l := l+1$}
\STATE{Set $B_l = MA_l^{-1}$}
\STATE{Set $b_{ij}$ equal to the largest in modulus entry of $B_l$}
\ENDWHILE
\end{algorithmic}
\end{algorithm}

The maximal volume algorithm~\ref{maxvolalg} generates a sequence of increasing volumes of submatrices. In other words, we have the following theorem.

\begin{theorem} [cf. \cite{GOSTZ08}]
The sequence $\{v_l\} = \{\vol(A_l)\}$ is increasing and is bounded above. Hence, Algorithm~\ref{maxvolalg}
converges. 
\end{theorem}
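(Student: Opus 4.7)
The plan is to show that each iteration multiplies the volume by a factor strictly larger than $1+\epsilon$, while the volume is bounded above by the global maximum over all $r \times r$ submatrices of $M$.

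The first step is to derive the key update identity
\[
\det(A_{l+1}) = b_{ij}\,\det(A_l),
\]
where $(i,j)$ is the pivot chosen in the loop. Since $B_l = M A_l^{-1}$, the rows of $M$ equal $B_l A_l$, so the $i$th row of $M$ is $\sum_{k=1}^{r} b_{ik} (A_l)_{k,:}$. When we form $A_{l+1}$ by replacing the $j$th row of $A_l$ with this linear combination, multilinearity of the determinant in the $j$th row expresses $\det(A_{l+1})$ as a sum of $r$ determinants; all but the $k=j$ term vanish because those matrices contain two copies of the same row of $A_l$. Thus only the term $b_{ij}\det(A_l)$ survives. Taking absolute values and recalling that for a square matrix $\vol(A)=\sqrt{\det(AA^*)}=|\det(A)|$ gives
\[
v_{l+1} = |b_{ij}|\, v_l > (1+\epsilon)\, v_l,
\]
which proves strict monotonicity.

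For the upper bound, note that $A_l$ is always one of the $r \times r$ submatrices of $M$, and there are only $\binom{n}{r}$ such submatrices. Hence $v_l \le V_{\max} := \max\{\vol(B) : B \text{ is an } r \times r \text{ submatrix of } M\} < \infty$. Combining this with the geometric growth $v_l \ge v_0 (1+\epsilon)^l$, we see that the loop must terminate after at most $\log(V_{\max}/v_0)/\log(1+\epsilon)$ iterations, establishing convergence (in fact, finite termination).

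The only subtle point is the determinant identity in step one; once that is in hand, the rest is essentially monotone convergence together with a crude counting argument. I would present the identity as a short lemma so that it can be reused (indeed, it also clarifies why the entries of $B_l$ measure the multiplicative improvement available at each pivot).
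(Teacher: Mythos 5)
Your proof is correct and follows the same basic strategy as the paper: show $\vol(A_l)$ is strictly increasing and bounded above. The paper's own proof is essentially a two-line sketch (it asserts monotonicity without proof and bounds the volumes via Hadamard's inequality rather than by the finite maximum over all $\binom{n}{r}$ submatrices), so your write-up supplies the details it omits, in particular the identity $\det(A_{l+1}) = b_{ij}\det(A_l)$ via multilinearity. One genuine improvement on your side: the paper concludes termination merely from the convergence of a bounded monotone sequence, which by itself does not rule out infinitely many iterations, whereas your observation that $v_l \ge v_0(1+\epsilon)^l$ yields an explicit bound of $\log(V_{\max}/v_0)/\log(1+\epsilon)$ iterations and hence finite termination.
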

\begin{proof}
It is easy to understand that $\vol(A_l)$ is increasing.  
Since the determinant of all submatrices of $A$ are bounded above by Hadamard's inequality, the sequence 
is convergent and hence
the algorithm will terminate and produce a good approximation of dominant submatrix.
\end{proof} 

We now  generalize Algorithm~\ref{maxvolalg} 
to find a $r \times r$ dominant submatrix of an $n \times m$ matrix $M$ 
by searching for the 
largest in modulus entry of both $B_l = M(:,J_l)A_l^{-1}$, 
and $C_l = A_l^{-1}M(I_l,:)$ at each step.


\begin{algorithm}[H]
\caption{2D Maximal Volume Algorithm\label{2dmvalg}}
\begin{algorithmic}[1]
\REQUIRE $n \times m$ matrix $M$, $r\times r$ submatrix 
$A_0$ with $\det(A_0) \neq 0$, tolerance 
$\epsilon > 0$, $l=0$, $b_{ij} = \infty$ and $A_l=A_0$
\ENSURE $A_l$ a close to dominant submatrix with indices $(I_l,J_l)$ in $M$
\WHILE{$|b_{ij}| > 1 + \epsilon$}
\STATE{Let $B_l = M(:,J_l)A_l^{-1}$, and $C_l = A_l^{-1}M(I_l,:)$}
\STATE{Set $b_{ij}$ equal to the largest in modulus entry of both $B_l$ and $C_l$}
\IF{$b_{ij}$ is from $B_l$}
\STATE{Replace the $j$th row of $A_l$ with the $i$th row of $M(:,J_l)$}
\ENDIF
\IF{If $b_{ij}$ is from $C_l$}
\STATE{Replace the $i$th column of $A_l$ with the $j$th column of $M(I_l,:)$}
\ENDIF
\STATE{  $l := l+1$}
\ENDWHILE
\end{algorithmic}
\end{algorithm}

However, Algorithm~\ref{2dmvalg} 
requires two backslash operations at each step. 
To simplify this to one backslash operation at each step, 
we may consider an alternating maxvol algorithm such as Algorithm \ref{alg3} where we alternate 
between optimizing swapping rows and columns. Note that this 
converges to a dominant submatrix because the sequence $\vol(A_l)$ is again increasing 
and bounded above.

\begin{algorithm}[H]
 \caption{Alternating Maximal Volume Algorithm \label{alg3}}
\begin{algorithmic}[1]
\REQUIRE $n \times m$ matrix $M$, $r\times r$ submatrix $A_0$ with $\det(A_0) \neq 0$, tolerance $\epsilon > 0$, 
$l=0$, $b_{ij} = \infty$ and $A_l=A_0$
\ENSURE $A_l$ a close to dominant submatrix of $M$ with index set $(I_l,J_l)$ in 
$M$.
\WHILE{$\max\{|b_{ij}|,|c_{ij}|\} > 1 + \epsilon$}
\STATE{Let $B_l = M(:,J_l)A_l^{-1}$.}
\STATE{Set $b_{ij}$ equal to the largest in modulus entry of $B_l$.}
\IF{$|b_{ij}| > 1 + \epsilon$}
\STATE{Replace the $j$th row of $A_l$ with the $i$th row 
of $M(:,J_l)$.}
\ENDIF
\STATE{Let $C_l = A_l^{-1}M(I_l,:)$.}
\STATE{Set $c_{ij}$ equal to the largest in modulus entry of $C_l$.}
\IF{$|c_{ij}| > 1 + \epsilon$}
\STATE{Replace the $i$th column of $A_l$ with the $j$th column of $M(I_l,:)$.}
\ENDIF
\STATE{$l := l+1$.}
\ENDWHILE
\end{algorithmic}
\end{algorithm}

We now introduce a greedy strategy to reduce the number of backslash operations 
needed to find a dominant submatrix by swapping more rows at each step, which 
will be called a greedy maxvol algorithm. 
The greedy maxvol algorithm is similar to the maxvol algorithm except that 
instead of swapping one row every iteration, we may swap two or more rows. 
First, we will describe the algorithm for swapping at 
most two rows of an $n \times r$ matrix, which we will call $2$-greedy maxvol.
Given an $n \times r$ matrix $M$, initial $r \times r$ submatrix $A_0$ such that $\det(A_0) \neq 0$, 
and tolerance $\epsilon > 0$,  we use Algorithm~\ref{gmvalg} to find a dominant matrix.   


\begin{algorithm}[H]
 \caption{$2$-Greedy Maximal Volume Algorithm\label{gmvalg}}
\begin{algorithmic}[1]
\REQUIRE $n \times r$ matrix $M$, $r\times r$ submatrix $A_0$ with $\det(A_0) \neq 0$, tolerance $\epsilon > 0$
\ENSURE $A_l$ a close to dominant submatrix of $M$
\STATE{Let $l = 0, B_0 = MA_0^{-1}$ and $A_l=A_0$.}
\STATE{Set $b_{i_1j_1}$ equal to the largest in modulus entry of $B_0$.}
\WHILE{$|b_{i_1j_1}| > 1 + \epsilon$}
\STATE{Replace the $j_1$th row of $A_l$ with the $i_1$th row of $M$.}
\STATE{Set $b_{i_2j_2}$ equal to the largest in modulus entry of $B_l$ over all rows excluding the 
$j_1$st row.}
\STATE{Let $B'_l = \begin{bmatrix}
    b_{i_1j_1} & b_{i_1j_2} \\
    b_{i_2j_1} & b_{i_2j_2}
    \end{bmatrix}$.}
\IF{$|\det(B'_l)| > |b_{i_1j_1}|$}
\STATE{Replace the $j_2$th row of $A_l$ with the $i_2$th row of $M$}
\ENDIF
\STATE{$l := l+1$.}
\STATE{Let $B_l = MA_l^{-1}$.}
\STATE{Set $b_{i_1j_1}$ equal to the largest in modulus entry of $B_l$.}
\ENDWHILE
\end{algorithmic}
\end{algorithm}

To prove that this algorithm~\ref{gmvalg} converges, 
recall Hadamard's inequality. For an $n \times n$ matrix $M$, we have:
\begin{equation}
\label{hadamard}
|\det(M)| \leq \prod_{i=1}^n ||M(:,i)||.
\end{equation}
With the estimate above, it follows 
\begin{theorem}
The sequence $\{|\det(A_\ell)|, \ell\ge 1\}$ from Algorithm~\ref{gmvalg} is increasing and is bounded above 
by $\prod_{i=1}^n ||M(:,i)||$. Therefore, Algorithm~\ref{gmvalg} converges.
\end{theorem}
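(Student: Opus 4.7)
The plan is to mirror the monotonicity argument used for the standard maxvol algorithm while tracking the two conditional branches of Algorithm~\ref{gmvalg}. The key observation is that each iteration left-multiplies $A_l$ by an explicit ``almost-identity'' matrix whose determinant is either a single entry or a $2\times 2$ minor of $B_l$, and in either case the multiplicative factor exceeds $1+\epsilon$ whenever the while loop is entered.

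First I would derive the update rule for $\det(A_l)$. Since $B_l = M A_l^{-1}$ gives $M = B_l A_l$, the $i$-th row of $M$ equals $B_l(i,:)\,A_l$, so replacing the $j$-th row of $A_l$ by the $i$-th row of $M$ is equivalent to left-multiplying $A_l$ by the matrix $E^{(1)}$ obtained from $I_r$ by substituting its $j$-th row with $B_l(i,:)$. A Laplace expansion along row $j$ gives $\det(E^{(1)}) = b_{ij}$. For the double swap, the corresponding elementary matrix $E^{(2)}$ differs from $I_r$ only in rows $j_1, j_2$; a generalized Laplace expansion on columns $j_1, j_2$ (only rows $j_1, j_2$ contribute nonzero terms, and the complementary $(r-2)\times(r-2)$ cofactor is the identity) yields $\det(E^{(2)}) = \det(B'_l)$ with sign $(-1)^{2(j_1+j_2)} = +1$. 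Since $j_1 \ne j_2$ and the source rows of $M$ are fixed, the sequential replacements performed in lines 4 and 8 coincide with the simultaneous replacement $A_l \mapsto E^{(2)} A_l$.

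Combining these identities with the loop condition, the while loop is entered only when $|b_{i_1 j_1}| > 1+\epsilon$. In the single-swap branch, $|\det(A_{l+1})| = |b_{i_1 j_1}|\,|\det(A_l)| > (1+\epsilon)\,|\det(A_l)|$. In the double-swap branch, the test $|\det(B'_l)| > |b_{i_1 j_1}|$ gives $|\det(A_{l+1})| = |\det(B'_l)|\,|\det(A_l)| > (1+\epsilon)\,|\det(A_l)|$. Hence $\{|\det(A_l)|\}$ strictly increases by at least the factor $1+\epsilon$ per iteration.

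For the upper bound, $A_l$ is an $r \times r$ submatrix of $M$ obtained by selecting $r$ of the $n$ rows while keeping all columns, so each column of $A_l$ is a subvector of the corresponding column of $M$ and satisfies $\|A_l(:,i)\| \leq \|M(:,i)\|$. Hadamard's inequality~(\ref{hadamard}) then gives $|\det(A_l)| \leq \prod_i \|M(:,i)\|$, the claimed bound. A monotone real sequence bounded above converges, and the uniform multiplicative gap of $1+\epsilon$ further forces the algorithm to terminate after at most $\lceil \log_{1+\epsilon}(\prod_i \|M(:,i)\|/|\det(A_0)|) \rceil$ iterations. The main obstacle is the bookkeeping in the generalized Laplace expansion for the two-row case, though once set up the computation is routine.
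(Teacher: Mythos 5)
Your proof is correct and follows essentially the same route as the paper's: both arguments reduce each iteration to multiplying $\det(A_l)$ by $b_{i_1j_1}$ (single swap) or $\det(B'_l)$ (double swap), each exceeding $1$ in modulus, and then invoke Hadamard's inequality for the upper bound; your explicit elementary-matrix/Laplace-expansion bookkeeping is just a more detailed rendering of the paper's observation that left-multiplication by $A_l^{-1}$ preserves ratios of corresponding subdeterminants. The only addition is your explicit $\lceil\log_{1+\epsilon}(\cdot)\rceil$ termination bound, a harmless strengthening.
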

\begin{proof}
Note that multiplication by an invertible matrix does not change the ratio of determinants of 
pairs of corresponding sub-matrices. Suppose we only swap one row. Then $\frac{\det(A_{l+1})}{\det(A_l)} = 
\frac{b_{i_1j_1}}{\det(I)} = b_{i_1j_1}$. Therefore, as $|b_{i_1j_1}| > 1$, we have$|\det(A_{l+1})| > 
|\det(A_l)|$.

Now suppose we swap two rows. Then after permutation of rows, the submatrix of $B_n$ corresponding to $A_{l+1}$
 is $\begin{bmatrix}
B'_l & R \\
0 & I
\end{bmatrix}$ in block form, which has determinant $\det(B'_l)$. Therefore similarly to before, we have that 
$\det(A_{l+1}) = \det(B'_l) \det(A_l)$, and since $|\det(B'_l)| > |b_{i_1j_1}| > 1$, we have that 
$|\det(A_{l+1})| > |\det(A_l)|$, so $|\det(A_l)|$ is an increasing sequence and bounded above.
\end{proof}

Note that when we swap two rows, the ratio between $|\det(A_{l+1})|$ and $|\det(A_l)|$ is maximized when 
$|\det(B'_l)|$ is maximized. 
For a more general algorithm, we search for a largest element in each of the $r$ rows of $B_l$: $|b_{i_1j_1}| 
\geq |b_{i_2j_2}| 
\geq \dots \geq |b_{i_rj_r}|$. We'll call $b_k := b_{i_kj_k}$. Let $B_l^{(k)} = \begin{bmatrix}
b_{i_1j_1} & b_{i_1j_2} & \dots & b_{i_1j_k} \\
b_{i_2j_1} & b_{i_2j_2} & \dots & b_{i_2j_k} \\
\vdots & \vdots & & \vdots \\
b_{i_kj_1} & b_{i_kj_2}  & \dots & b_{i_kj_k}
\end{bmatrix}$. Then we replace the $j_k$th row of $A_n$ with the $i_k$th row of $M$ if $|\det(\begin{bmatrix}
B_l^{(k)} & x_k \\
y_k & b_{k+1}
\end{bmatrix})| \geq |\det(
B_l^{(k)})|$, where $x_k = \begin{bmatrix}
b_{i_1j_k} \\  b_{i_2j_k} \\ \vdots \\ b_{i_{k-1}j_k}
\end{bmatrix}$, and $y_k = \begin{bmatrix} b_{i_kj_1} & b_{i_kj_2}  & \dots & b_{i_kj_{k-1}} \end{bmatrix}$. 
Using the Schur determinant formula for the determinant of block-matrices, 
this condition is equivalent to the condition that $|b_{k+1} - 
d_k[B_l^{(k)}]^{-1}c_k| \geq 1$. In particular, if $|b_{k+1}| > 1$, and $\hbox{sign}(b_{k+1}) \neq 
\hbox{sign}(d_k[B_l^{(k)}]^{-1}c_k)$, then the condition will be met. The general $h$-greedy maxvol algorithm for swapping at most $h$ rows at each 
iteration runs can be derived and the detail is left.

We may generalize this algorithm to $n \times m$ matrices by employing an alternating $h$-Greedy algorithm similar to before 
between the rows and columns. 
We have the following alternating $h$-greedy maxvol algorithm summarized in Algorithm \ref{hGreedy}. For convenience, we shall call greedy maximal volume algorithms (GMVA) for these algorithms discussed in this section. Let us also introduce
two matrices 
\begin{equation}
\label{B}
B'_k = \begin{bmatrix}
    b_{i_1j_1} & b_{i_1j_2} & \dots & b_{i_1j_k} \\
    b_{i_2j_1} & b_{i_2j_2} & \dots & b_{i_2j_k} \\
    \vdots & \vdots & & \vdots \\
    b_{i_kj_1} & b_{i_kj_2}  & \dots & b_{i_kj_k}
    \end{bmatrix}
\end{equation}
and
\begin{equation}
 \label{C}
C'_k = \begin{bmatrix}
    c_{i_1j_1} & c_{i_1j_2} & \dots & c_{i_1j_k} \\
    c_{i_2j_1} & c_{i_2j_2} & \dots & c_{i_2j_k} \\
    \vdots & \vdots & & \vdots \\
    c_{i_kj_1} & c_{i_kj_2}  & \dots & c_{i_kj_k}
    \end{bmatrix}
\end{equation}
to simplify the notations.

\begin{algorithm}[H] 
\caption{Alternating $h$-Greedy Maximal Volume Algorithm \label{hGreedy}}
{\small
\begin{algorithmic}[1]
\REQUIRE $n \times m$ matrix $M$, $r\times r$ submatrix $A_1$ with index set $(I_0,J_0)$ with $\det(A_1) \neq 0$, tolerance $\epsilon > 0$, $l = 0$, and $b_{i_1j_1}=c_{i_1j_1}=\infty$
\ENSURE $A_l$ a close to dominant submatrix of $M$ with index set $(I_l,J_l)$
\WHILE{$\max\{|b_{i_1j_1}|,|c_{i_1j_1}|\} > 1 + \epsilon$}
\STATE{Let $B_l = M(:,J_l)A_l^{-1}$.} 
\STATE{Set $b_{i_1j_1}$ equal to the largest in modulus entry of $B_l$.}
\IF{$|b_{i_1j_1}| > 1 + \epsilon$}
\STATE{Replace the $j_1$th row of $A_l$ with the $i_1$th row of $M$}
\ENDIF
\FOR{$k = 2:h$}
\STATE{Set $b_{i_k j_k}$ be the largest in modulus entry of $B_l$ over all row excluding the 
$j_1, \dots , j_{k-1}$ rows.}
\STATE{Let $B_k'$ be the matrix in (\ref{B})}
\IF{$|\det(B'_k)| > |\det(B'_{k-1})|$}
\STATE{Replace the $j_k$th row of $A_l$ with the $i_k$th row of $M$}
\STATE{\textbf{break}}
\ENDIF
\ENDFOR
\STATE{Let $C_l = A_l^{-1}M(I_l,:)$.}
\STATE{Set $c_{i_1j_1}$ equal to the largest in modulus entry of $C_l$.} 
\IF{$|c_{i_1j_1}| > 1 + \epsilon$}
\STATE{Replace the $i_1$th column of $A_l$ with the $j_1$th column of $M$}
\ENDIF
\FOR{$k = 2:h$}
\STATE{Set $c_{i_kj_k}$ be the largest in modulus entry of $C_l$ over all columns excluding the 
$i_1, \dots , i_{k-1}$ columns.}
\STATE{Let $C_k'$ be the matrix in (\ref{C})}
\IF{$|\det(C'_k)| > |\det(C'_{k-1})|$}
\STATE{Replace the $i_k$th column of $A_l$ with the $j_k$th column of $M$}
\STATE{\textbf{break}}
\ENDIF
\ENDFOR
\STATE{$l := l+1$}
\ENDWHILE
\end{algorithmic}
}
\end{algorithm}

\section{Numerical Experiments on GMVA and Applications} \label{secNum}
In this section, we first compare the numerical performances of GMVA by varying $h$ to demonstrate that our GMVA is much better than the maxvol algorithm (the case $h=1$).
Then we present two common  applications of the matrix cross approximation. That is, we apply the matrix cross approximation to image compression and least squares approximation of continuous functions via standard polynomial basis.  In addition, GMVA has also been applied to the compression of plasma physics data \cite{de2023compression} and high dimensional function approximation in \cite{LaiShenKST1,LaiShenKST2,shen_sparse}.

\subsection{Performance of GMVA}
First we report some simulation results to answer the questions: how
many backslash operations, and how much computational time the $h$-greedy maxvol algorithm can save when using the greedy maxvol algorithm. It is worth mentioning that to the authors in \cite{GOSTZ08} point out there is a fast way of calculating the matrix inverse in Algorithm~\ref{maxvolalg}, however, we skipped the implementation of those fast steps as they require a more prudent choice of the initial guess matrix.

We will generate $100$ random $5000\times r$ matrices and $5000\times 5000$ matrices, and calculate the average number of backslash operations, and average computational time needed to find a dominant submatrix within a relative error of $10^{-8}$ for $r=30,60,90,120,150,180,210,240$. We choose random row and column indices for generating the initial $r\times r$ matrix when conducting the experiments across all $h$.
The results are presented in Table~\ref{av_backslash1} and Table~\ref{av_backslash2}. As we can see,
the computational time and number of backslash operations needed decreases as $h$ increases up to $r$. Note that $h=1$ corresponds to the original maximal volume algorithm. This shows that our GMVA is more efficient when $h$ is large.

\begin{table}[t]
\centering
\caption{Average number of backslash operations and computational time (in seconds) needed to find a dominant submatrix of $100$ random $5000 \times r$ matrices using $h$-greedy maxvol algorithm within a relative error of $10^{-8}$. \label{av_backslash1}}
\vspace{3mm}
\begin{tabular}{l|llllc}
\toprule
  & \multicolumn{5}{c}{Average Number of Backslash Operations / Computational Time}                                                                         \\ 
\cmidrule{2-6}
   $r$  & \multicolumn{1}{c|}{$h = 1$} & \multicolumn{1}{c|}{$h = 2$} & \multicolumn{1}{c|}{$h = 3$} & \multicolumn{1}{c|}{$h = 4$} & $h = r$ \\ \midrule
$30$  & \multicolumn{1}{l|}{33.92 / 0.131}   & \multicolumn{1}{l|}{23.84 / 0.120}   & \multicolumn{1}{l|}{20.88 / 0.107}   & \multicolumn{1}{l|}{20.33 / 0.105}   & 19.84 / 0.103  \\
$60$  & \multicolumn{1}{l|}{50.56 / 0.265}   & \multicolumn{1}{l|}{34.05 / 0.212}   & \multicolumn{1}{l|}{31.78 / 0.199}   & \multicolumn{1}{l|}{31.31 / 0.197}   & 29.56 / 0.188   \\
$90$  & \multicolumn{1}{l|}{62.68 / 0.513}   & \multicolumn{1}{l|}{42.91 / 0.422}   & \multicolumn{1}{l|}{38.39 / 0.382}   & \multicolumn{1}{l|}{37.29 / 0.372}   & 38.06 / 0.384   \\
$120$ & \multicolumn{1}{l|}{71.64 / 0.864}   & \multicolumn{1}{l|}{51.46 / 0.717}   & \multicolumn{1}{l|}{44.43 / 0.619}   & \multicolumn{1}{l|}{42.57 / 0.597}   & 41.12 / 0.585   \\
$150$ & \multicolumn{1}{l|}{81.97 / 1.289}   & \multicolumn{1}{l|}{54.78 / 1.019}   & \multicolumn{1}{l|}{50.08 / 0.944}   & \multicolumn{1}{l|}{46.02 / 0.872}   & 46.33 / 0.895   \\
$180$ & \multicolumn{1}{l|}{89.75 / 1.853}   & \multicolumn{1}{l|}{58.93 / 1.404}   & \multicolumn{1}{l|}{54.06 / 1.286}   & \multicolumn{1}{l|}{52.54 / 1.258}   & 53.10 / 1.298   \\
$210$ & \multicolumn{1}{l|}{95.68 / 2.473}   & \multicolumn{1}{l|}{64.82 / 1.929}   & \multicolumn{1}{l|}{57.93 / 1.741}   & \multicolumn{1}{l|}{55.25 / 1.664}   & 53.37 / 1.629   \\
$240$ & \multicolumn{1}{l|}{99.65 / 3.022}   & \multicolumn{1}{l|}{69.85 / 2.434}   & \multicolumn{1}{l|}{60.77 / 2.135}   & \multicolumn{1}{l|}{57.39 / 2.025}   & 55.55 / 2.012  \\
\bottomrule
\end{tabular}
\end{table}

\begin{table}[t]
\centering
\caption{Average number of backslash operations and computational time (in seconds) needed to find a dominant submatrix of $100$ random $5000 \times 5000$ matrices using $h$-greedy maxvol algorithm within a relative error of $10^{-8}$. \label{av_backslash2}}
\vspace{3mm}
\begin{tabular}{l|llllc}
\toprule
   & \multicolumn{5}{c}{Average Number of Backslash Operations / Computational Time}    \\ \cmidrule{2-6} 
  $r$    & \multicolumn{1}{c|}{$h = 1$} & \multicolumn{1}{c|}{$h = 2$} & \multicolumn{1}{c|}{$h = 3$} & \multicolumn{1}{c|}{$h = 4$} & $h = r$ \\ \midrule
$30$  & \multicolumn{1}{l|}{33.63 / 0.136}   & \multicolumn{1}{l|}{23.58 / 0.093}   & \multicolumn{1}{l|}{21.16 / 0.080}   & \multicolumn{1}{l|}{20.49 / 0.075}   & 19.98 / 0.073   \\
$60$  & \multicolumn{1}{l|}{51.22 / 0.417}   & \multicolumn{1}{l|}{34.65 / 0.264}   & \multicolumn{1}{l|}{31.81 / 0.246}   & \multicolumn{1}{l|}{30.39 / 0.235}   & 30.13 / 0.237   \\
$90$  & \multicolumn{1}{l|}{65.52 / 0.835}   & \multicolumn{1}{l|}{44.31 / 0.558}   & \multicolumn{1}{l|}{38.75 / 0.492}   & \multicolumn{1}{l|}{38.71 / 0.494}   & 37.06 / 0.480  \\
$120$ & \multicolumn{1}{l|}{74.13 / 1.353}   & \multicolumn{1}{l|}{48.49 / 0.889}   & \multicolumn{1}{l|}{45.42 / 0.839}   & \multicolumn{1}{l|}{43.79 / 0.811}   & 43.09 / 0.813   \\
$150$ & \multicolumn{1}{l|}{80.81 / 2.007}   & \multicolumn{1}{l|}{55.44 / 1.387}   & \multicolumn{1}{l|}{51.24 / 1.295}   & \multicolumn{1}{l|}{47.77 / 1.211}   & 48.24 / 1.244   \\
$180$ & \multicolumn{1}{l|}{89.85 / 2.771}   & \multicolumn{1}{l|}{60.85 / 1.893}   & \multicolumn{1}{l|}{52.64 / 1.652}   & \multicolumn{1}{l|}{50.52 / 1.588}   & 49.04 / 1.568   \\
$210$ & \multicolumn{1}{l|}{95.74 / 3.667}   & \multicolumn{1}{l|}{63.96 / 2.475}   & \multicolumn{1}{l|}{57.88 / 2.254}   & \multicolumn{1}{l|}{56.89 / 2.229}   & 53.16 / 2.106   \\
$240$ & \multicolumn{1}{l|}{100.45 / 4.576}  & \multicolumn{1}{l|}{67.64 / 3.107}   & \multicolumn{1}{l|}{59.38 / 2.755}   & \multicolumn{1}{l|}{59.04 / 2.746}   & 55.41 / 2.622  \\
\bottomrule
\end{tabular}
\end{table}

\subsection{Application to image compression}
Many images are of low rank in nature or can be approximated by a low rank image. More precisely, the pixel values of an image in RGB colors are in integer matrix format.   
Using standard SVD to approximate a given image will lead to a low rank approximation with non-integer entries, which 
require more bits to store and transmit and hence defeat the purpose of compression of an image. 

In this subsection, we introduce the cross approximation for image compression. Let $M$ be a matrix with integer entries, we apply maximal volume algorithm to find a $r\times r$ submatrix $A_r$ and index sets $I$ and $J$ for row and column indices.  Then we can compute a
cross approximation $M(:,J) A_r^{-1} M(I,:)$ to approximate $M$. Let $U, S, V$ be the SVD approximation of $M$ with $S$ being the matrix of singular values, if $S(r+1,r+1)$ is small enough, by Theorem~\ref{mjlai06062021}, 
the cross approximation is a good one. One only needs to store and transmit $2nr-r^2$ integer entries for this image as both $M(:,J)$ and $M(I.:)$ contain the block $A_r$. Note that we do not need to save/store/transmit $A_r^{-1}$. Instead, it will be computed from $A_r$ after the storage/transmission. 

We shall present a few examples to demonstrate the effectiveness of this computational method. 
In practice, we only need to measure the peak signal-to-noise ratio (PSNR) to determine if $r$ is large enough or not. That is, do many decomposition and reconstruction to find the best rank $r$ as well as index pair $(I,J)$ before  storing or sending the components $M(:,J), M(I,J^c)$ 
which are compressed by using the standard JPEG, where $J^c$ stands for the complement index of $J$. Since the size of the two components is smaller than the size of the original entire image, the application of JPEG to the two components should yield a better compression performance than the original image and hence, the matrix cross approximation can help further increase the performance of the image compression.  

In the following, We use 13 standard images to test our method and 
report our results in Table~\ref{caex1}. That is, the PSNR for these images with 
their ranks are shown in Table~\ref{caex1}. In addition, in 
Figure~\ref{caImage1}, we show the comparison of the original images with 
reconstructed images
from their matrix cross approximation. Table~\ref{caex1} shows that 
many images can be recovered to have PSNR equals to $32$ or better by using fewer than 
half of the integer entries of the entire matrix. In other words, about the 
half of the pixel values are needed to be encoded for compression 
instead of the entire image. 
 
\begin{table}[h]
\caption{Various image compression results \label{caex1}}
\centering
\vspace{3mm}
\begin{tabular} {l|c|c|r|r|r }
\toprule
 images & m & n  & rank &Ratio & PSNR\\ \midrule
bank   &    512  &    512 &     355 &   0.90 &    32.12\\
barbara &    512  &    512 &     260 &   0.76 &    32.22\\
clock  &    256  &    256 &     90 &   0.58 &    32.48\\
lena &    512  &    512 &     240 &   0.72 &    32.20\\
house  &    256  &    256 &     90 &   0.58 &    32.38\\
myboat &    256  &    256 &     135 &   0.78 &    32.80\\
saturn &    512  &    512 &      55 &   0.20 &    32.21\\
F16    &    512  &    512 &     235 &   0.71 &    32.21\\
pepper &    512  &    512 &     370 &   0.92 &    32.23\\
knee   &    512  &    512 &     105 &   0.37 &    32.24\\
brain  &    512  &    512 &     110 &   0.38 &    32.62\\
mri   &    512  &    512 &     130 &   0.44 &    34.46\\
\bottomrule
\end{tabular}
\end{table}

\begin{figure}[htpb]
\centering
\begin{tabular}{c} 
\includegraphics[width=0.8\textwidth,
height=0.4\textwidth]{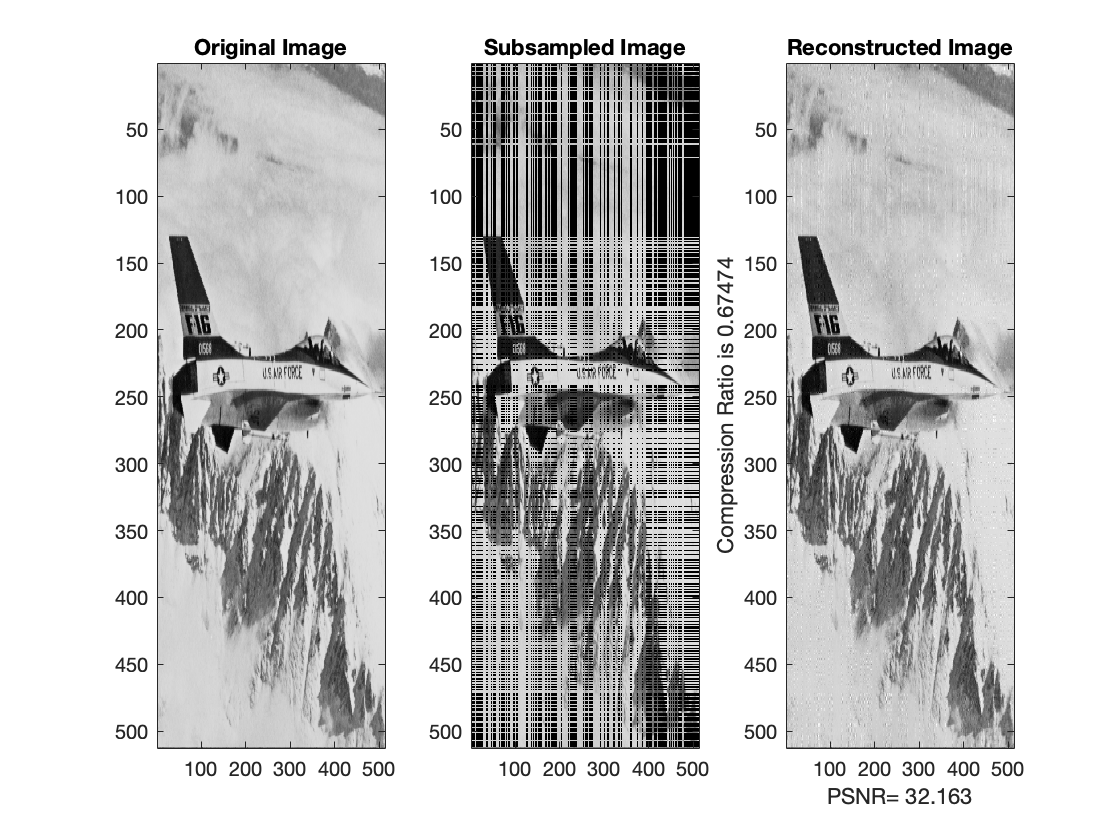}  \\
\includegraphics[width=0.8\textwidth,
height=0.4\textwidth]{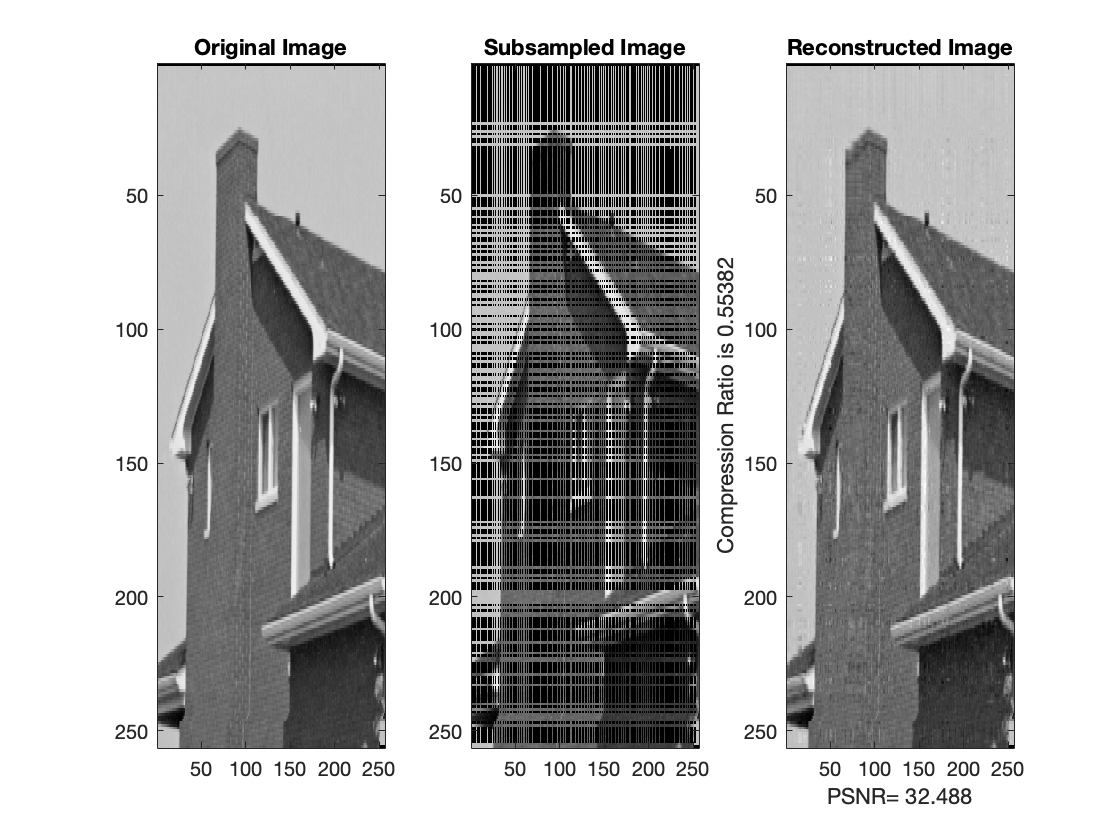} \\
\includegraphics[width=0.8\textwidth,
height=0.4\textwidth]{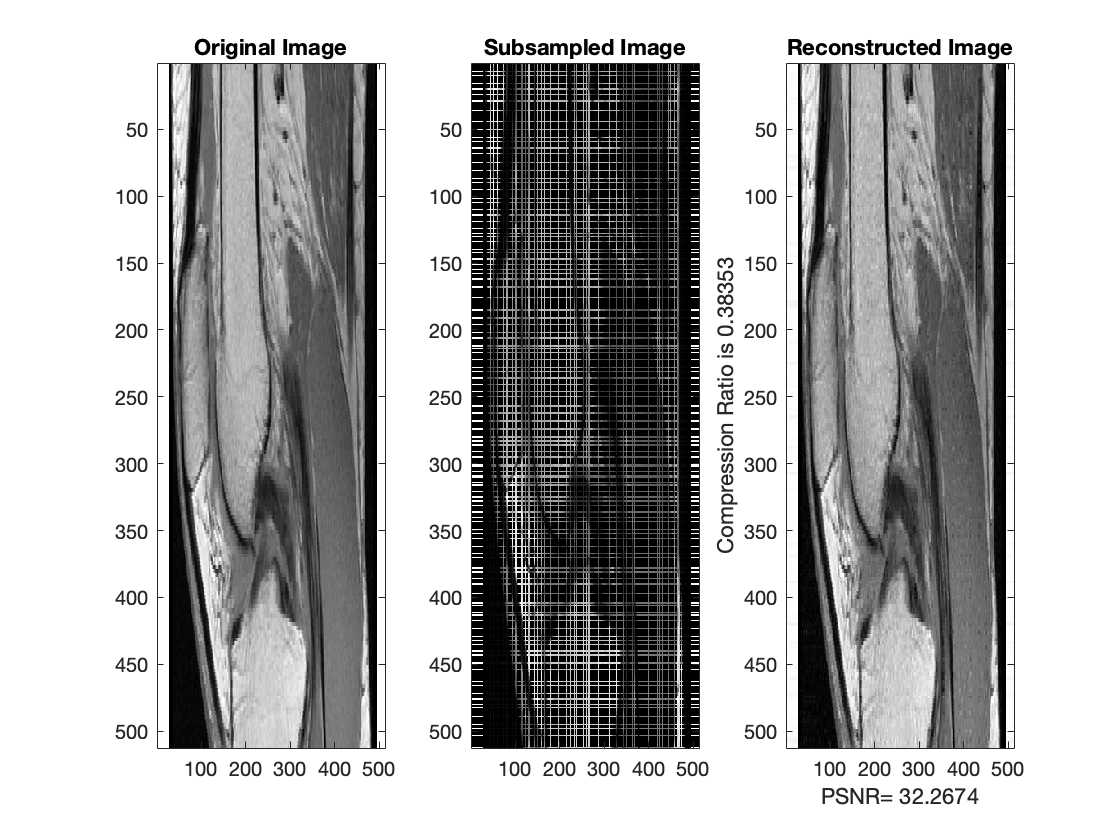} \\
\end{tabular}
\caption{Comparison of original images, subsampled images, and reconstructed images \label{caImage1}}
\end{figure}

\subsection{Application to least squares approximation}
Next, we turn our attention to the approximation of least squares problem 
$\min_{\bf x} \|A \bfx -\bfb\|$ for a given observation matrix $A$ and observation vector $\bfb$. 
We shall write $\bfx_{\bfb}$ to the least
square solution and explain how to use the matrix cross approximation to find a
good approximation of $\bfx_{\bfb}$.  As explained before, we shall use $A_{:,J}A_{I,J}^{-1}A_{I,:}$ to approximate the observation matrix $A$. For convenience, let us write 
\begin{equation}
    A = 
    \begin{bmatrix}
    A_{I,J} & A_{I,J^c} \\
    A_{I^c, J} & A_{I^c, J^c}
\end{bmatrix}.
\end{equation}
The matrix cross approximation is 
\begin{equation}
A_{:,J}A_{I,J}^{-1}A_{I,:}=
    \begin{bmatrix}
        A_{I,J} \\
        A_{I^c,J}
    \end{bmatrix} A_{I,J}^{-1} 
    \begin{bmatrix}
        A_{I,J} & A_{I,J^c}
    \end{bmatrix} 
    =
    \begin{bmatrix}
        E_I \\
        A_{I^c,J}A_{I,J}^{-1}
    \end{bmatrix}
    \begin{bmatrix}
        A_{I,J} & A_{I,J^c}
    \end{bmatrix},
\end{equation}
where $E_I$ is the identity matrix. For simplicity, we use Theorem~\ref{mjlai06062021} to have
\begin{equation} \label{lsqInequ1}
    \|A_{I^c,J^c}-A_{I^c,J}A_{I,J}^{-1}A_{I,J^c}\|_C\leq \frac{(r+1)\sigma_{r+1}(A)}{\sqrt{1+\sum_{k=1}^{r}\frac{\sigma_{r+1}^2(A)}{\sigma_k^2(A)}}}.
\end{equation}
Note that all the entries in $A_{I^c,J}A_{I,J}^{-1}$ are less than or equal to 1 in absolute value. One way to find an approximation of the least squares solution is to solve
\begin{equation} \label{upperhalfLSQ}
    \begin{bmatrix}
        A_{I,J} & A_{I,J^c}
    \end{bmatrix}
    \mathbf{\hat{x}}
    = \mathbf{b}_I
\end{equation}
where $\mathbf{b}=[\mathbf{b}_I;\mathbf{b}_{I^c}]$ consists of two parts according to the indices in $I$ and the complement indices in $I^c$. Such a solution can be solved using the sparse solution techniques as discussed in \cite{LW2021,shen_sparse} or simply solve $A_{I,J}\mathbf{\hat{x}}=\mathbf{b}_I$ with zero entries in $\bfx$ associated with the indices in $J^c$. Hence, there is no residual or near zero residual in the first part. Our main concern is the residual vector in the second part 
\begin{equation}
    \begin{bmatrix}
        A_{I^c,J} & A_{I^c,J^c} 
    \end{bmatrix}
    \mathbf{\hat{x}} - \mathbf{b}_{I^c}=A_{I^c,J}\mathbf{\hat{x}}-\mathbf{b}_{I^c}=A_{I^c,J}A_{I,J}^{-1}\mathbf{b}_I-\mathbf{b}_{I^c}.
\end{equation}
Recall $\bfx_\bfb$ is a least squares solution. Let us write the residual vector $A\mathbf{x}_\bfb-\mathbf{b}=:\mathbf{\epsilon}$ with $\mathbf{\epsilon}$ consisting of two parts, i.e., $\mathbf{\epsilon}=[\mathbf{\epsilon}_I;\mathbf{\epsilon}_{I^c}]$. Then $\mathbf{b}_{I^c}=
\begin{bmatrix}
    A_{I^c,J} & A_{I^c,J^c}
\end{bmatrix}
\mathbf{x}_b-\epsilon_{I^c}$. We have
\begin{align*}
    A_{I^c,J}\mathbf{\hat{x}}-\mathbf{b}_{I^c}&=A_{I^c,J}A_{I,J}^{-1}\mathbf{b}_I-\mathbf{b}_{I^c} \\
    & = A_{I^c,J}A_{I,J}^{-1}\mathbf{b}_I-\begin{bmatrix}
        A_{I^c,J} & A_{I^c,J^c}
    \end{bmatrix}
    \mathbf{x}_b + \epsilon_{I^c} \\
    & = A_{I^c,J}A_{I,J}^{-1}\mathbf{b}_I - 
    \begin{bmatrix}
        A_{I^c,J} & A_{I^c,J}A^{-1}_{I,J}A_{I,J^c}
    \end{bmatrix}
    \mathbf{x}_b \\
    & +
    \begin{bmatrix}
        0_{I^c,J} & A_{I^c,J}A^{-1}_{I,J}A_{I,J^c}-A_{I^c,J^c}
    \end{bmatrix}\mathbf{x}_b+\epsilon_{I^c} \\
    & = A_{I^c,J}A_{I,J}^{-1}(\mathbf{b}_I-\begin{bmatrix}
        A_{I,J} & A_{I,J^c}
    \end{bmatrix}\mathbf{x}_b) \\
    & +
    \begin{bmatrix}
        0_{I^c,J} & A_{I^c,J}A^{-1}_{I,J}A_{I,J^c}-A_{I^c,J^c}
    \end{bmatrix}\mathbf{x}_b+\epsilon_{I^c} \\
    & = A_{I^c,J}A_{I,J}^{-1}\epsilon_I+
    \begin{bmatrix}
        0_{I^c,J} & A_{I^c,J}A^{-1}_{I,J}A_{I,J^c}-A_{I^c,J^c}
    \end{bmatrix} \mathbf{x}_b+\epsilon_{I^c}.
\end{align*}
It follows that 
\begin{equation}
\begin{aligned}
  \|A\mathbf{\hat{x}}-\mathbf{b}\|_{\infty} & = \| A_{I^c,J}\mathbf{\hat{x}}-\mathbf{b}_{I^c}\|_\infty \cr
 & \leq \|A_{I^c,J}A^{-1}_{I,J}\epsilon_I\|_{\infty}+\|A_{I^c,J}A^{-1}_{I,J}A_{I,J^c}-A_{I^c,J^c}\|_C\|\mathbf{x}_b\|_1+\|\epsilon_{I^c}\|_{\infty} \\
  & \leq \|\epsilon_I\|_1+\|A_{I^c,J}A^{-1}_{I,J}A_{I,J^c}-A_{I^c,J^c}\|_C\|\mathbf{x}_b\|_1+\|\epsilon_{I^c}\|_{\infty}.
\end{aligned}
\end{equation}
since all the entries of $A_{I^c,J}A_{I,J}^{-1}$ are less than or equal to 1 in absolute value,
By applying (\ref{lsqInequ1}), we have established the following main result in this section: 

\begin{theorem}
\label{May2023}
    Let $\mathbf{\epsilon}=[\mathbf{\epsilon}_I;\mathbf{\epsilon}_{I^c}]$ and let the residual vector $A\mathbf{x}_\bfb-\mathbf{b}=\mathbf{\epsilon}$. Then
    \begin{equation} \label{lsqIneq}
        \|A\mathbf{\hat{x}}-\mathbf{b}\|_{\infty}\leq \|\mathbf{\epsilon}_I\|_1+\frac{(r+1)\sigma_{r+1}(A)}{\sqrt{1+\sum_{k=1}^{r}\frac{\sigma_{r+1}^2(A)}{\sigma_k^2(A)}}}\|\mathbf{x}_b\|_1+\|\mathbf{\epsilon}_{I^c}\|_{\infty}.
    \end{equation}
\end{theorem}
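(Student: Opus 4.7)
The plan is to exploit the block structure of $A$ induced by the index sets $I, I^c$ (rows) and $J, J^c$ (columns) and chase a sequence of algebraic identities until the cross-approximation error bound of Theorem~\ref{mjlai06062021} can be invoked. First I would write
\begin{equation*}
A=\begin{bmatrix} A_{I,J} & A_{I,J^c}\\ A_{I^c,J} & A_{I^c,J^c}\end{bmatrix},\qquad \mathbf{b}=\begin{bmatrix}\mathbf{b}_I\\ \mathbf{b}_{I^c}\end{bmatrix},\qquad \mathbf{x}_b=\begin{bmatrix}(\mathbf{x}_b)_J\\(\mathbf{x}_b)_{J^c}\end{bmatrix},
\end{equation*}
define $\hat{\mathbf{x}}$ as the solution of the ``upper-half'' system $[A_{I,J}\;A_{I,J^c}]\hat{\mathbf{x}}=\mathbf{b}_I$ given in (\ref{upperhalfLSQ}), and note immediately that by construction the top half of the residual vector $A\hat{\mathbf{x}}-\mathbf{b}$ vanishes. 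Hence $\|A\hat{\mathbf{x}}-\mathbf{b}\|_\infty$ reduces to the infinity norm of the bottom block $A_{I^c,:}\hat{\mathbf{x}}-\mathbf{b}_{I^c}$, and the whole problem becomes a problem of estimating this one vector.

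Next I would substitute $\hat{\mathbf{x}}$ into the bottom block. Since $A_{I,J}\hat{\mathbf{x}}_J+A_{I,J^c}\hat{\mathbf{x}}_{J^c}=\mathbf{b}_I$, one has $A_{I^c,J}A_{I,J}^{-1}\mathbf{b}_I=A_{I^c,J}\hat{\mathbf{x}}_J+A_{I^c,J}A_{I,J}^{-1}A_{I,J^c}\hat{\mathbf{x}}_{J^c}$. The key algebraic trick is then to add and subtract $A_{I^c,J}A_{I,J}^{-1}A_{I,J^c}$ to $A_{I^c,J^c}$ inside the expression for $\mathbf{b}_{I^c}$ (using $\mathbf{b}_{I^c}=A_{I^c,:}\mathbf{x}_b-\boldsymbol{\epsilon}_{I^c}$) so that one factors out an $A_{I^c,J}A_{I,J}^{-1}$ acting on $\mathbf{b}_I-[A_{I,J}\;A_{I,J^c}]\mathbf{x}_b=\boldsymbol{\epsilon}_I$. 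Carrying this through, as in the displayed chain of equalities preceding the theorem, yields
\begin{equation*}
A_{I^c,:}\hat{\mathbf{x}}-\mathbf{b}_{I^c}=A_{I^c,J}A_{I,J}^{-1}\boldsymbol{\epsilon}_I+\bigl[\mathbf{0}\;\;A_{I^c,J}A_{I,J}^{-1}A_{I,J^c}-A_{I^c,J^c}\bigr]\mathbf{x}_b+\boldsymbol{\epsilon}_{I^c}.
\end{equation*}

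Finally I would apply the triangle inequality for $\|\cdot\|_\infty$ and bound the three pieces separately. The first term is bounded by $\|\boldsymbol{\epsilon}_I\|_1$ because $A_{I,J}$ is (approximately) dominant so every entry of $A_{I^c,J}A_{I,J}^{-1}$ has modulus at most one, hence each coordinate of $A_{I^c,J}A_{I,J}^{-1}\boldsymbol{\epsilon}_I$ is an $\ell^1$-average-like sum bounded by $\|\boldsymbol{\epsilon}_I\|_1$. The middle term is bounded by $\|A_{I^c,J}A_{I,J}^{-1}A_{I,J^c}-A_{I^c,J^c}\|_C\,\|\mathbf{x}_b\|_1$ (the Chebyshev-times-$\ell^1$ version of Hölder), and this Chebyshev norm is exactly the residual entry of the cross approximation on the bottom-right block, hence controlled by Theorem~\ref{mjlai06062021}, giving the factor $(r+1)\sigma_{r+1}(A)/\sqrt{1+\sum_{k=1}^r\sigma_{r+1}^2(A)/\sigma_k^2(A)}$. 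The third term is $\|\boldsymbol{\epsilon}_{I^c}\|_\infty$ directly. Combining these three estimates yields (\ref{lsqIneq}).

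The main obstacle is the correct bookkeeping in the add-and-subtract step that isolates $\boldsymbol{\epsilon}_I$; once that factoring is done the rest is simply triangle inequality plus the dominance property plus an invocation of the earlier cross-approximation bound. A secondary subtlety is ensuring that the bound $|(A_{I^c,J}A_{I,J}^{-1})_{ij}|\le 1$ really follows from the hypothesis on $A_{I,J}$, which requires that the algorithm producing $(I,J)$ yields a dominant (not merely maximal-volume) submatrix, a fact already highlighted in the paper via Lemma~\ref{GOSTZ08}.
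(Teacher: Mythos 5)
Your proposal is correct and follows essentially the same route as the paper: the same block decomposition, the same add-and-subtract manipulation isolating $A_{I^c,J}A_{I,J}^{-1}\boldsymbol{\epsilon}_I$, and the same three-term triangle inequality using dominance of $A_{I,J}$ and the bound of Theorem~\ref{mjlai06062021} on the bottom-right residual block. No substantive differences to report.
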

The solution $\mathbf{\hat{x}}$ to (\ref{upperhalfLSQ}) is much faster and the residual has a similar  estimate as the original least squares solution 
if $\sigma_{r+1}(A) \ll 1$. The computational time is spent on finding a dominant matrix which is dependent on the number of iterations (MATLAB backslash) of matrices of size $r\times r$. This new computational method will be useful when dealing with least squares tasks which have multiple right-hand side vectors. 

Traditionally, we often use the well-known singular value decomposition (SVD) to 
solve the least squares problem. The computational method explained above has an 
distinct feature that the SVD does not have.  Let us use the following 
numerical example to illustrate the feature.

Now let us present some numerical examples of finding the best least squares approximation of functions of two variables by using a polynomial basis. In general, this scheme can be applied to any dimensional function approximation. 
Suppose that we use polynomials of degrees less or equal to 10, i.e.  
basis functions $1,x,y,x^2, xy, y^2, \cdots,y^{10}$  to find the best coefficients $c_0,c_1,\cdots,c_{65}$ such that 
\begin{equation} 
\label{2DlsqEqn}
   \min_{c_0, \cdots, c_{65}} \| f(x,y) -( c_0+c_1 x+c_2 y  +\cdots+c_{65}y^{10})\|_{L^2}
\end{equation}
for $(x,y)\in [-1,1]^2$.  
We can approximate (\ref{2DlsqEqn}) by using discrete least squares method, i.e., we find the coefficients $\mathbf{c}=(c_0,\cdots,c_n)$ such that
\begin{equation} \label{lsqmin}
    \min_{c_0,\cdots,c_n}\|f(\mathbf{x}_j)-\sum_{i=0}^n c_i\phi_i(\mathbf{x}_j)\|_{\ell^2},
\end{equation}
where $\phi_i$'s are the polynomial basis functions and 
$\mathbf{x}_j$'s are the sampled points in $[-1, 1]^2$ with $n\gg 1$.  

In our experiments, we select continuous functions across different families such as $e^{x^2+y^2}$, $\sin(x^2+y^2)$, $\cos(x^2+y^2)$, $\ln(1+x^2+y^2)$, $(1+x^4+y^4)/(1+x^2+y^2)$, Franke's function, Ackley's function, Rastrigin's function, and wavy function as the testing functions to demonstrate the performances. Some plots of the functions are shown on the left side of Figure~\ref{pivotalpoints}.

\begin{figure}[h]
\centering 
\begin{tabular}{cc}
     \includegraphics[width=0.5\textwidth]{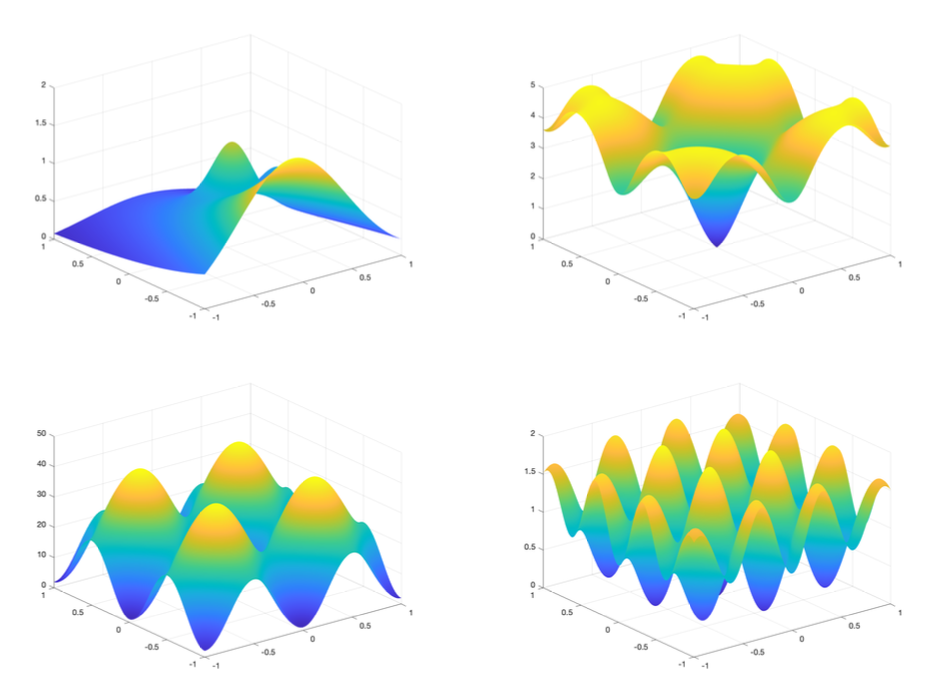} 
     &
     \includegraphics[width=0.47\textwidth]{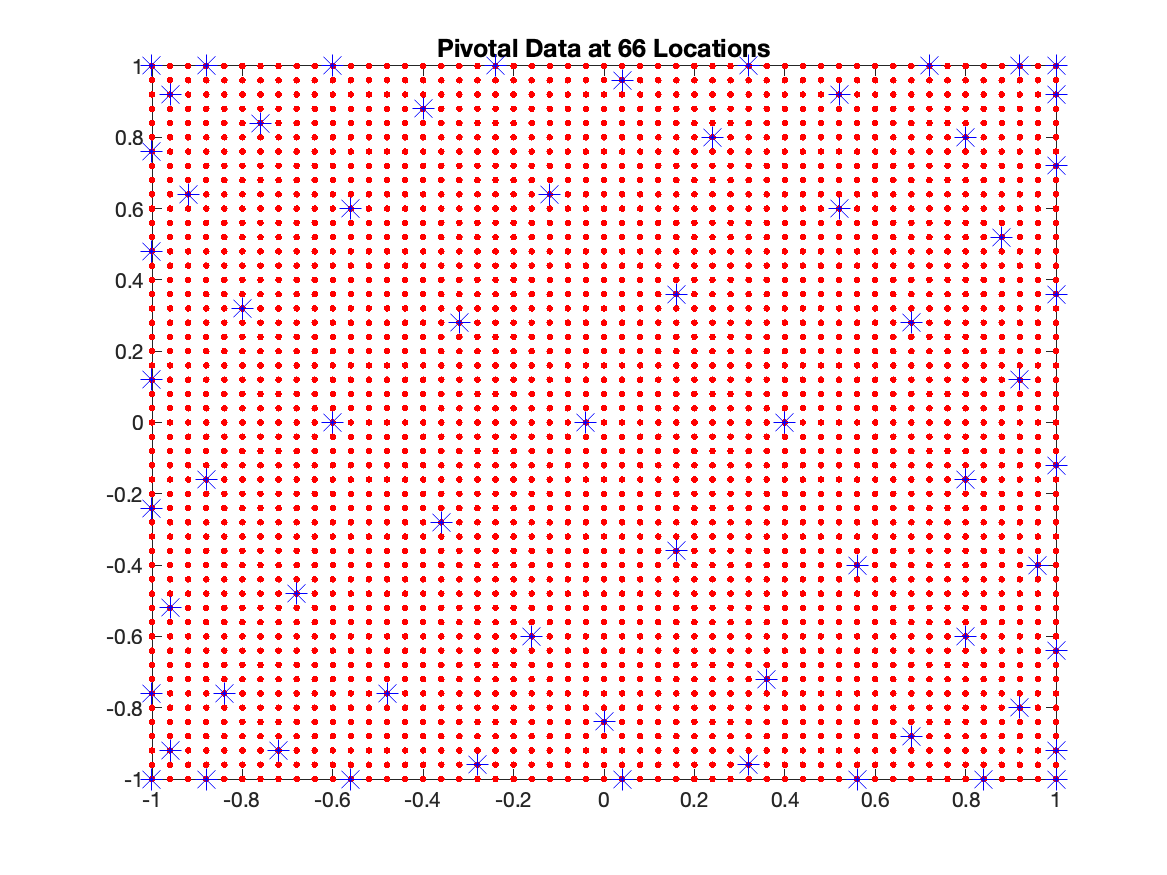} 
\end{tabular}
\vspace{-3mm}
\caption{Left: Plot of some testing functions (top two: Franke and Ackley. bottom two: Rastrigin and wavy). Right: The entire data locations (red) and the pivotal data locations (blue).  \label{pivotalpoints}}
\end{figure}

We uniformly sampled $51^2$ points over $[-1,1]^2$ to solve for an estimated $\hat{\mathbf{c}}$ of $\mathbf{c}$ of the discrete least squares problem (\ref{lsqmin}). We report the relative error between the estimated $\hat{f}$ and the exact function $f$ in Table~\ref{relTable2D}. The relative errors are computed based on $501^2$ points over $[-1,1]^2$.

Alternatively, we can use our matrix cross approximation technique to select some specific rows and therefore solve a smaller discrete least squares problem. It is worthy noting that our data matrix $[\phi_i(\mathbf{x}_j)]_{i,j}$ in this setup is of full column rank, therefore the second term on the right side of (\ref{lsqIneq}) is zero and the maximal volume algorithm is going to select some specific rows together with all the columns. Let us call these selected rows the pivotal rows, or pivotal sample locations, we can solve (\ref{lsqmin}) only based on those $\mathbf{x}_j$'s which belong to the pivotal sample locations to get an estimated $\hat{f}_p$ of $f$. Right side of Figure~\ref{pivotalpoints} shows the pivotal locations of the uniform sampling. 
We also report the relative error between $\hat{f}_{p}$ and the exact testing
function $f$ in Table~\ref{relTable2D}. Again the relative errors are computed  based on  $501^2$ points over $[-1,1]^2$.  

We can see that the two columns of relative errors in Table~\ref{relTable2D} 
are in the same magnitude, which shows that we just need to use the data at pivotal locations in order to have a good approximation instead of using the entire data $51^2$ locations. That is, our computational method for the least squares solution reveals the pivotal locations in $[-1, 1]^2$ for measuring the unknown target
functions so that we can use fewer measurements and quickly recover them.   

\begin{table}[t]
\caption{Relative errors of least squares approximations for functions of two variables by using $51^2$ sample locations and by using pivotal data locations \label{relTable2D}}
\centering
\vspace{3mm}
\begin{tabular} {c|c|c}
\toprule
$f(x,y)$ & $\|f-\hat{f}\|_{rel}$ & $\|f-\hat{f}_p\|_{rel}$  \\ 
\midrule
$e^{x^2+y^2}$ & 1.93E-05 & 4.59E-05 \\
$\sin(x^2+y^2)$ & 2.13E-05 & 5.07E-05 \\
$\cos(x^2+y^2)$ & 1.28E-05 & 2.83E-05 \\
$\ln(1+x^2+y^2)$ & 1.06E-04 & 2.10E-04 \\
$(1+x^4+y^4)/(1+x^2+y^2)$ & 3.40E-04 & 6.57E-04 \\
Franke's function & 5.90E-02 & 8.10E-02 \\
Ackley's function & 2.10E-02 & 4.05E-02 \\
Rastrigin's function & 7.65E-04 & 1.10E-03 \\ 
Wavy function & 7.03E-02 & 9.80E-02 \\
\bottomrule
\end{tabular}
\end{table}


\section{Conclusion and future research}
In this work, we proposed a family of greedy based maximal volume algorithms which improve the classic maximal volume algorithm in terms of the error bound of cross approximation and the computational efficiency. The proposed algorithms are shown to have theoretical guarantees on convergence and are demonstrated to have good performances on the applications such as image compression and least squares solution.

Future research can be conducted along the following directions. Firstly, we would like to weaken the underlying assumption or seek for a better error bound and an efficient algorithm which can achieve the corresponding bound. Secondly, as data matrices in the real-world applications are usually incomplete or not fully observable, it is vital to generalize the algorithms to the setting which can fit in the case where matrices have missing values. Thirdly, if a matrix is sparse or special structured, we want to develop some more pertinent algorithm to have a better approximation result. Finally, 
we plan to generalize the proposed approach to the setting of tensor  approximation.

\section*{Acknowledgements}
The authors are very grateful to the editor and referees for their helpful comments. In particular, the authors would like to thank one of the referees for bringing up the most recent reference \cite{HH23} to their attention.

\section*{Declarations}

\paragraph{Funding} The second author is supported by the Simon Foundation for collaboration grant \#864439. 

\paragraph{Data Availability} The data and source code that support the findings of this study are available upon request.

\paragraph{Conflict of Interest}
The authors have no conflicts of interest to declare.





\end{document}